\title{Poorly connected groups}
\author{David Hume}
\address{Mathematical Institute, University of Oxford, Oxford, OX2 6GG.}
\email{david.hume@maths.ox.ac.uk}
\author{John M. Mackay}
\address{School of Mathematics, University of Bristol, Bristol, BS8 1TX.}
\email{john.mackay@bristol.ac.uk}
\address{}
\email{}
\date{\today}
\thanks{The first author was supported by a Titchmarsh Fellowship of the University of Oxford. The second author was supported in part by EPSRC grant EP/P010245/1.}
\numberwithin{equation}{section}
\newtheorem{theorem}[equation]{Theorem}
\newtheorem{proposition}[equation]{Proposition}
\newtheorem{corollary}[equation]{Corollary}
\newtheorem{lemma}[equation]{Lemma}
\theoremstyle{definition}
\newtheorem{question}[equation]{Question}
\newtheorem{definition}[equation]{Definition}
\newtheorem{remark}[equation]{Remark}
\newtheorem*{theorem*}{Theorem}
\newtheorem*{assump*}{Standing assumption}
\newtheorem*{remark*}{Remark}
\newtheorem*{claim*}{Claim}
\newtheoremstyle{citing}% name
  {3pt}%      Space above, empty = `usual value'
  {3pt}%      Space below
  {\itshape}% Body font
  {}%         Indent amount (empty = no indent, \parindent = para indent)
  {\bfseries}% Thm head font
  {}%        Punctuation after thm head
  {.5em}%     Space after thm head: " " = normal interword space;
\theoremstyle{citing}
\newtheorem*{varthm}{}% all text supplied in the note
\DeclareMathOperator{\diam}{diam}
\DeclareMathOperator{\cut}{cut}
\DeclareMathOperator{\sep}{sep}
\newcommand{\setcon}[2]{\left\{#1\ \left|\ #2\right.\right\}}
\newcommand{\abs}[1]{\left\lvert#1\right\rvert}
\newcommand{\ra}{\rightarrow}
\newcommand{\R}{\mathbb{R}}
\newcommand{\N}{\mathbb{N}}
\newcommand{\Z}{\mathbb{Z}}
\def\XXint#1#2#3{{\setbox0=\hbox{$#1{#2#3}{\int}$}
\vcenter{\hbox{$#2#3$}}\kern-.5\wd0}}
\numberwithin{equation}{section}
\begin{document}

\begin{abstract} We investigate groups whose Cayley graphs have poor\-ly connected subgraphs.  We prove that a finitely generated group has bounded separation in the sense of Benjamini--Schramm--Tim\'ar if and only if it is virtually free. We then prove a gap theorem for connectivity of finitely presented groups, and prove that there is no comparable theorem for all finitely generated groups. Finally, we formulate a connectivity version of the conjecture that every group of type $F$ with no Baumslag-Solitar subgroup is hyperbolic, and prove it for groups with at most quadratic Dehn function.
\end{abstract} 

\maketitle

\section{Introduction}\label{sec:intro}

When studying an infinite group through the geometry of its Cayley graphs, a natural question to ask is: If the Cayley graph is poorly connected, what does this imply about the structure of the group?

If we interpret this question as asking about disconnecting the Cayley graph by sets of finite diameter, we arrive at the theory of ends as explored by Freudenthal, Hopf, Stallings and others.
However, we can also vary the question by instead asking about disconnecting  the Cayley graph, or all its subgraphs, by sets of finite, or at least relatively small, \emph{volume}.

 The invariant we use to make this precise is the separation profile, which was introduced by Benjamini, Schramm and Tim\'ar~\cite{BenSchTim-12-separation-graphs} as a measurement of how hard it can be to cut subgraphs of $X$ into components of at most half the size.

 In this paper we study groups where the separation profile is small: we characterise those groups with bounded separation profile, find a gap theorem for finitely presented groups, and explore connections with Gromov hyperbolicity.

 We begin by recalling the definition of the separation profile.
\begin{definition}\label{def:cut-sep} 
  A subset $S$ of the vertex set $V\Gamma$ of a finite graph $\Gamma$ is a \textbf{cut--set} of $\Gamma$, if $\Gamma-S$ has no connected component with more than $\abs{\Gamma}/2$ vertices.
  The \textbf{cut--size} of $\Gamma$, $\cut(\Gamma)$, is the minimal cardinality of a cut set of $\Gamma$.
  The \textbf{separation profile} of an infinite graph $X$ is the function $\sep_X:\N\to\N$ defined by
\[
\sep_X(n)=\max\setcon{\cut(\Gamma)}{\Gamma \subset X,\ \abs{\Gamma}\leq n}.
\]
\end{definition}

  We consider separation profiles up to the equivalence $\simeq$ defined by $f\simeq g$ if $f\lesssim g$ and $g\lesssim f$, where $f\lesssim g$ if there exists a constant $C>0$ such that $f(n)\leq Cg(Cn+C)+C$ for all $n$.

  As an invariant, the separation profile enjoys the following robustness: if $X,Y$ are bounded degree graphs and $f:X\to Y$ is a Lipschitz map such that $\sup_{y\in VY}\abs{f^{-1}(y)}<\infty$, then $\sep_X \lesssim \sep_Y$. We call a map $f$ \textbf{regular} if it satisfies the above two properties.
  
  In particular, the separation profile of a finitely generated group is independent of the choice of Cayley graph, and for any finitely generated subgroup $H$ of a finitely generated group $G$ we have $\sep_H\lesssim\sep_G$.

  To give a flavour of potential separation profiles, we note that $\Z^d$ has $\sep_{\Z^d}(n) \simeq n^{(d-1)/d}$, cocompact Fuchsian groups have separation $\simeq \log n$, and (virtually) free groups have bounded separation profiles~\cite{BenSchTim-12-separation-graphs}; there are also examples of hyperbolic groups with separation $\simeq n^\alpha$ for a dense set of $\alpha \in (0,1)$~\cite{HumeMackTess}. Separation profiles are always at most linear, the case where a graph has linear separation is completely explained in \cite{HumSepExp}. The goal of this paper is to look at the other extreme.
  
First we observe that groups with bounded separation have a simple characterisation.
\begin{theorem}\label{thm:finite-sep}
A vertex transitive, bounded degree, connected graph $X$ has bounded separation if and only if $X$ is quasi-isometric to a tree.

In particular, a finitely generated group $G$ has bounded separation if and only if $G$ is virtually free.
\end{theorem}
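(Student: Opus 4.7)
The forward direction is straightforward. Every finite subgraph of a tree is a forest, and a forest is cut into components of size at most half the total by removing a centroid of its largest tree component, so trees have separation bounded by $1$. A quasi-isometry between bounded degree graphs is regular, so by the monotonicity property of separation stated above, any graph quasi-isometric to a tree has bounded separation. For the group case, a virtually free group is quasi-isometric via a finite-index free subgroup to a tree, giving one implication.

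For the converse, suppose $X$ is a connected, bounded degree, vertex transitive graph with $\sep_X\le C$, and the goal is to show $X$ is quasi-isometric to a tree. The plan is to analyse the ends of $X$. The cases of zero or two ends give $X$ quasi-isometric to a point or a line, both trees. The crucial step is to rule out one end: for a one-ended vertex transitive graph, isoperimetric considerations should force the cut sizes of large balls to grow without bound, contradicting $\sep_X\le C$. I expect this to be the most delicate step, and anticipate it following from a careful analysis in the spirit of the logarithmic lower bound already established for one-ended graphs like cocompact Fuchsian groups. This leaves the case of infinitely many ends, where a structure-tree construction of Dunwoody--Kr\"on--M\"oller type applies: bounded separation forces every minimal edge cut between two ends to have at most $C$ edges, producing a tree $T$ on which $\mathrm{Aut}(X)$ acts with bounded edge stabilisers. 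Iterating inside each vertex region, the uniform bound on cut sizes throughout substitutes for a finite presentation hypothesis in a Linnell-type accessibility argument, ensuring termination and yielding a quasi-isometry $X\to T$.

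For the group statement, once the Cayley graph of $G$ is known to be quasi-isometric to a tree, $G$ has more than one end (or is finite or two-ended), so by Stallings' theorem it splits over a finite subgroup. Vertex groups inherit bounded separation, split further, and because every edge group in the iterated splitting has order bounded by a universal constant coming from $\sep_G$, Linnell's accessibility over finite subgroups of bounded order terminates the decomposition with finite vertex groups. Hence $G$ is virtually free. The main obstacle throughout is the one-endedness step in the graph case; everything else is either an application of the definition of regular map or a reduction to well-known splitting and accessibility machinery, calibrated so that the a priori bound $C$ provides the finiteness needed to iterate.
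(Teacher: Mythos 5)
Your forward direction (quasi-isometric to a tree $\Rightarrow$ bounded separation) is correct and is essentially the same as the paper's, via regular maps. The converse, however, takes a genuinely different route from the paper, and the route as written has a real gap.

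The paper's proof never does a case analysis on ends. Instead, it invokes the treewidth machinery: by \cite[Lemma 2.3]{BenSchTim-12-separation-graphs}, bounded separation forces a uniform treewidth bound on all finite subgraphs; by Kuske--Lohrey \cite[Theorems 3.3, 3.7]{KusLoh05-LogicalAspects} a connected, bounded degree, vertex transitive graph of bounded treewidth admits a map to a tree whose fibres have both bounded cardinality and bounded diameter; this is exactly Manning's Bottleneck Property, so \cite[Theorem 4.6]{Man05-Geom-pseudochar} gives the quasi-isometry to a tree. No ends, no structure trees, no accessibility needed at the graph level. The group statement then follows from the Stallings--Dunwoody characterisation of virtually free groups as those quasi-isometric to trees. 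Your route, by contrast, splits into zero/two/one/infinitely-many-ends cases and invokes Dunwoody--Kr\"on--M\"oller structure trees and Linnell-type accessibility in the infinitely-ended case. That is a legitimate strategy in spirit, but it is considerably heavier, and it pushes all the difficulty into the two steps you leave unproved.

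The genuine gap is the one-ended case. You write that ``isoperimetric considerations should force the cut sizes of large balls to grow without bound'' and that you ``anticipate'' this following from an analysis in the spirit of the logarithmic lower bound. But the paper's logarithmic lower bound (Proposition~\ref{prop-oneended-cutball}) is proved via van Kampen diagrams and depends crucially on a finite presentation --- indeed the Remark following Theorem~\ref{thm:fpgap} states the bound for one-ended vertex transitive graphs only under a \emph{coarse simple connectivity} hypothesis, which is not available for a general bounded-degree vertex transitive graph. So the step you flag as ``the main obstacle'' is not merely delicate; you have given no argument for it, and the argument you gesture at does not apply in the generality you need. A secondary issue, in the infinitely-ended case, is the assertion that bounded separation bounds minimal edge cuts between ends: the definition of $\sep_X$ bounds vertex cut-sets of \emph{finite subgraphs}, and passing from that to a uniform bound on end-separating cuts of $X$ itself requires an argument you have not supplied. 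Both gaps are neatly sidestepped by the treewidth approach, which is why the paper's proof is so short.
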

This follows by combining work of Benjamini, Schramm and Tim\'ar with results of Kuske and Lohrey on graphs with ``bounded treewidth'', see Section~\ref{sec:bdd-sep}.
Note that the first claim of Theorem~\ref{thm:finite-sep} fails for general bounded degree graphs: as observed in \cite{BenSchTim-12-separation-graphs}, the Sierpi\'nski triangle graph has bounded separation but is not quasi-isometric to a tree.

Theorem \ref{thm:finite-sep} raises a natural question: if a group is not virtually free, how poorly connected can it be?
In the case of finitely presented groups, we find a gap in the spectrum of possible separation profiles.
 We use the notation $ B_r$ for a closed ball of radius $r$ in a metric space, or $ B_r(x)$ if the centre $x$ of the ball is important.
\begin{theorem}\label{thm:fpgap} 
  A finitely presented group $G$  which is not virtually free satisfies
\[
 \sep_G(n)\gtrsim \kappa_G(n),
\]
where $\kappa_G$ is the \textbf{inverse growth function} of the Cayley graph of $G$:
\[ \kappa_G(n)=\max\setcon{r\in\N}{\abs{ B_r}\leq n}.\]

  In particular, if $G$ is finitely presented, either
	\begin{itemize}
		\item $\sep_G(n) \simeq 1$ and $G$ is virtually free, or
		\item $\sep_G(n) \gtrsim \log n$ and $G$ is not virtually free.
	\end{itemize}
\end{theorem}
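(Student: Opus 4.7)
The plan is to reduce to showing the ball-cut estimate $\cut(B_r) \gtrsim r$ for all sufficiently large $r$. Given this estimate, setting $r=\kappa_G(n)$ yields $|B_r|\leq n$ and hence $\sep_G(n) \geq \cut(B_r) \gtrsim r = \kappa_G(n)$, as required. The ``in particular'' dichotomy then follows because any bounded-degree graph satisfies $|B_r|\leq (2d)^r$, giving $\kappa_G(n) \gtrsim \log n$ for any infinite finitely generated group, while the bounded case is covered by Theorem~\ref{thm:finite-sep}.

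The substance of the proof is the ball-cut estimate $\cut(B_r) \gtrsim r$. My approach would combine two structural properties of $G$. First, \emph{not quasi-isometric to a tree}: by Theorem~\ref{thm:finite-sep}, the Cayley graph $X$ is not quasi-isometric to any tree, so by Manning's bottleneck criterion $X$ fails the bottleneck property at every scale; for each $\Delta > 0$ there exist $x,y\in X$ with a geodesic midpoint $m$ admitting an $xy$-path avoiding $ B_\Delta(m)$. Second, \emph{finite presentation}: with relators of length at most $L$, the Cayley 2-complex $\tilde X$ is simply connected, so every closed loop in $X$ bounds a van Kampen disk whose 2-cells have perimeter $\leq L$.

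Given a cut-set $S\subset B_r$ with $|S|=k$ and components of $B_r\setminus S$ each of size at most $|B_r|/2$, group them into two large sides $U,V$ with $|U|,|V|\gtrsim |B_r|$. Applying the bottleneck failure at scales $\rho = 1,2,\ldots,r$, and using vertex-transitivity to centre each configuration at the identity, one constructs for each $\rho$ a ``detour loop'' $\ell_\rho$ in $B_r$ consisting of a geodesic through $e$ of length $\sim \rho$ concatenated with a path from one endpoint to the other avoiding $ B_\rho(e)$. Finite presentation is then used to resolve pairwise intersections between loops at different scales via van Kampen fillings, yielding $\gtrsim r$ essentially vertex-disjoint loops in $B_r$. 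A separating argument, using the topology of $\tilde X$, forces the cut-set $S$ to intersect each $\ell_\rho$: the geodesic arc of $\ell_\rho$ lies in an ``inner'' region of $B_r$ while the detour arc lies in an ``outer'' region, and these cannot coexist on the same side of a balanced cut without contradicting simple connectivity and the size bound $|U|,|V|\gtrsim|B_r|$. Hence $|S| \gtrsim r$.

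The main obstacle is the twofold combinatorial-topological argument at the end: first, producing $\gtrsim r$ essentially vertex-disjoint detour loops at distinct scales requires a careful filling argument to eliminate overlaps without destroying the bottleneck structure; second, ruling out the possibility that $S$ avoids some $\ell_\rho$ entirely (by placing the whole loop inside a single side) requires a topological/convexity argument using simple-connectedness of $\tilde X$, perhaps after first replacing $S$ by a ``normalised'' cut-set that intersects each sphere $\partial  B_\rho(e)$ in a controlled way. This is where the interplay between not-QI-to-a-tree (at every scale) and finite presentability (controlling topology at a single scale $L$) does the real work.
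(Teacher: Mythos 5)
Your high-level strategy (reduce to showing $\cut(B_r)\gtrsim r$, then set $r=\kappa_G(n)$) matches the paper, and the observation that $\kappa_G(n)\gtrsim\log n$ for bounded-degree Cayley graphs is exactly how the dichotomy is obtained. However, the substantive part of the argument --- how one actually forces a cut set of $B_r$ to be large --- takes a genuinely different route from the paper, and as sketched it has real gaps.

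The paper does not argue via failure of the bottleneck property. Its first and essential move is an \emph{accessibility} reduction: a finitely presented group that is not virtually free splits, by Dunwoody accessibility, as a graph of groups with finite edge groups, and at least one vertex group $H$ is one-ended, finitely presented, and undistorted; monotonicity of separation under subgroups then reduces the problem to the one-ended case. The ball-cut estimate is then proved for one-ended, finitely presented $G$ by showing (Lemma~\ref{lem:detour-path}) that around any bounded $8M$-coarsely connected set $T$ one can reroute paths within the annulus $\overline{A}(T,M,4M)$, using one-endedness to send geodesic rays to infinity outside $N(T,4M)$ and then closing up with a van Kampen filling. Given a small candidate cut set $S\subset B_r$, one decomposes $S$ into $8M$-coarsely connected clumps and reroutes any path around each clump, showing $B_r\setminus S$ still has one huge component. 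No system of disjoint test loops is ever produced.

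By contrast, your plan replaces one-endedness with the weaker global statement that the bottleneck property fails, and then tries to build $\gtrsim r$ essentially disjoint detour loops that $S$ must all hit. Three concrete gaps arise. First, the negation of Manning's criterion gives, for each $\Delta$, \emph{some} pair $(x,y)$ and midpoint $m$ with a detour path missing $B_\Delta(m)$, but it controls neither the scale $d(x,y)$ nor where the configuration sits --- it does not furnish a configuration at each scale $\rho=1,\dots,r$ centred at the identity, and translating by vertex-transitivity does not make the detour stay inside $B_r$. Second, without one-endedness (or the accessibility reduction to a one-ended piece) there is no reason such detours exist at all near a given point: in a group with infinitely many ends the complement of a small ball can be disconnected, so no detour around it exists, and ``not QI to a tree'' is compatible with this happening at most locations. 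Third, even granting the disjoint loops, a balanced cut set is not obliged to meet every loop in the graph; the claim that each $\ell_\rho$ must straddle $U$ and $V$ is not established and is false in general, since a loop could sit entirely inside one side. Fixing all of this would essentially force you back to the paper's structure: isolate a one-ended vertex group via accessibility, and in that piece use coarse simple connectedness to reroute paths around the actual cut set rather than around pre-chosen loops.
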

The example of cocompact Fuchsian groups shows that the $\log n$ bound is sharp.
In the special case that $G$ is assumed to be hyperbolic (and hence finitely presented), Theorem~\ref{thm:finite-sep} and the log-gap of Theorem~\ref{thm:fpgap} were shown by Benjamini--Schramm--Tim\'ar~\cite[Theorem 4.2]{BenSchTim-12-separation-graphs}.

Theorem \ref{thm:fpgap} is proven in Section~\ref{sec:loggap} by showing that in a one-ended finitely presented group it is always possible to connect annuli of bound\-ed radius, implying that to cut a ball of radius $r$ requires (at least) a set of size proportional to $r$. We then use in a crucial way the accessibility of finitely presented groups to extend the gap theorem from one-ended finitely presented groups to all finitely presented groups.
(Given this use of accessibility, one may wonder what the separation of an inaccessible group can be.)

For finitely generated groups we show that there is no gap like that of Theorem~\ref{thm:fpgap} in the possible separation profiles.

\begin{theorem}\label{thm:infpresnogap} Let $\rho:\N\to\N$ be an unbounded non-decreasing function. There is a finitely generated group $G$ % which is not virtually free 
  such that
\[
1 \not\simeq \sep_G(n) \quad\text{and}\quad \sep_G(n) \not \gtrsim \rho(n).
\]
\end{theorem}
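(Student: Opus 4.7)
The plan is to construct, for each unbounded non-decreasing $\rho : \N \to \N$, an infinitely presented finitely generated group $G$ whose Cayley graph is tree-like at most scales but contains rare ``defect'' cycles, so that separation is unbounded but grows more slowly than $\rho$. By Theorem~\ref{thm:fpgap}, every non-virtually-free finitely presented group has separation $\gtrsim \log n$, so unless $\rho$ grows faster than $\log n$ we are forced to use an infinitely presented $G$.

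To construct $G$, pick a very rapidly growing sequence $n_1 \ll n_2 \ll \cdots$ with $\rho(n_k) \geq k^2$ (possible since $\rho$ is unbounded), and cyclically reduced words $w_k \in F_2$ with $\abs{w_k} \asymp n_k$ satisfying a strong small cancellation condition $C'(\lam)$ for some small $\lam > 0$. Set
\[
G := \langle a, b \mid w_1, w_2, w_3, \ldots \rangle.
\]
By small cancellation theory, each finite quotient $G_m := \langle a, b \mid w_1,\ldots,w_m\rangle$ is non-elementary hyperbolic, and the Cayley graph $X$ of $G$ agrees with that of $G_m$ on balls of radius comparable to $\lam n_{m+1}$. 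By choosing the $w_k$ so that $G$ has, say, torsion or a surface subgroup, one ensures $G$ is not virtually free, whence Theorem~\ref{thm:finite-sep} gives $\sep_G \not\simeq 1$.

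The main step is then to prove $\sep_G \not\gtrsim \rho$, which should follow from an upper bound of the shape $\sep_G(n) \leq C \cdot k(n)$ where $k(n) := \max\set{k : n_k \leq n}$: together with $\rho(n_k) \geq k^2$ this makes $\rho/\sep_G$ unbounded. For this bound one argues that any finite subgraph $\Gamma \subset X$ with $\abs{\Gamma}\leq n$ only ``sees'' the relators $w_1,\ldots,w_{k(n)}$, since the remaining relators are too long to be embedded, and away from translates of these relator loops the Cayley graph is tree-like. Cutting $\Gamma$ should then reduce to cutting along a controlled collection of relator loops at each scale, together with negligible tree-cuts.

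Making this precise is the main obstacle: although each scale contributes only boundedly many distinct relator loops up to translation, a single ball can contain many translated copies, and one needs a hierarchical cutting argument — essentially a careful analysis of the structure of Dehn diagrams in $G$ across the scales $n_1,\ldots,n_{k(n)}$ — to ensure the cut-size grows with the number of distinct relator scales rather than with the number of translates. The rates $(n_k)$ and $\abs{w_k}$, and possibly a graphical rather than classical small cancellation setup, must be tuned precisely enough for this inductive argument to close.
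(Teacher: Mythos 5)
Your high-level strategy is the right one: build $G$ as a direct limit of finitely presented groups $G_1\to G_2\to\cdots\to G$ so that the Cayley graph of $G$ agrees with that of $G_m$ on larger and larger balls, then read off an upper bound on $\sep_G$ at the corresponding scales from the $G_m$. This is exactly the shape of the paper's argument. But the construction you choose breaks at a specific and unavoidable point.

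The genuine gap is that your finite quotients $G_m=\langle a,b\mid w_1,\ldots,w_m\rangle$ are, as you say, non-elementary hyperbolic --- but a $C'(\lambda)$ small cancellation group on a long relator set is essentially never virtually free (for instance, in the torsion-free case it has cohomological dimension $2$, and in the torsion case one sees this via surface or graph-of-groups subgroups). By Theorem~\ref{thm:fpgap}, a finitely presented group that is not virtually free has $\sep_{G_m}(n)\gtrsim\log n$. Since the Cayley graph of $G$ coincides with that of $G_m$ on balls of radius $\asymp \lambda n_{m+1}$, this lower bound is inherited by $G$ at those scales, and in fact $\sep_G(n)\gtrsim\log n$ for all $n$. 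That already kills the statement for any $\rho$ with $\rho\lesssim\log n$ (e.g.\ $\rho(n)=\lfloor\log\log n\rfloor$). Concretely, your target estimate $\sep_G(n)\leq C\,k(n)$ is false: take $\Gamma$ to be a ball of radius $r$ inside the part of $X$ isometric to the Cayley graph of $G_1$; there $\cut(\Gamma)\gtrsim r\simeq\log|\Gamma|$ by Proposition~\ref{prop-oneended-cutball} (applied to a one-ended vertex group of $G_1$), whereas $k(|\Gamma|)=1$. The intuition that the graph is ``tree-like away from relator loops'' is not quantitatively strong enough, because a single relator already produces a hyperbolic quotient with logarithmic separation.

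What is missing is the requirement that the intermediate groups $G_m$ themselves be \emph{virtually free}, so that $\sep_{G_m}$ is \emph{bounded} by some constant $M_m$ (Theorem~\ref{thm:finite-sep}), and one can then arrange the scales so that $M_m\leq\rho(l_m)^{1/2}$. Achieving virtually-free-but-not-stably-virtually-free intermediate stages whose limit is still infinite and non-virtually-free is exactly what the lacunary hyperbolic construction of Ol'shanskii--Osin--Sapir, cited in the paper as \cite[Lemma 3.24]{Olsh_Osin_Sapir}, provides; an ordinary small cancellation presentation cannot do this. Replacing your $G_m$ by the virtually free groups from that construction, and keeping the rest of your scale-selection argument, recovers the paper's proof.
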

The groups we use are the elementary amenable lacunary hyperbolic groups constructed in \cite{Olsh_Osin_Sapir}, and the key property we require of them is that they are not virtually free, but are limits of virtually free groups (see Section~\ref{sec:fg-no-gap}). We note that these are the first examples of amenable groups whose separation profile is not $n/\kappa(n)$ where $\kappa$ is the inverse growth function.

Finally, we consider the following question, to which no counterexample is currently known.  
\begin{question}\label{q:subquad}
  If $G$ is a finitely presented group, and $\sep_G(n) = o(n^{1/2})$, then must $G$ be hyperbolic?
\end{question}

As some weak evidence for this conjecture, note that such a $G$ cannot contain a subgroup isomorphic to $\Z^2$ (with separation $\simeq n^{1/2}$) or more generally a Baumslag--Solitar group (which have separation $n^{1/2}$ or $n/\log n$ by Hume--Mackay--Tessera~\cite{HumMackTes2}), and it is a well-known question whether such groups of type $F$ must necessarily be hyperbolic.

Here we present a step towards a positive answer to Question~\ref{q:subquad}.
\begin{theorem}\label{thm:fp+quadDehnimpliessqroot} Let $G$ be a finitely presented group with (exactly) quadratic Dehn function. Then there is an infinite subset $I\subseteq \N$ such that $\sep_G(n)\gtrsim n^{1/2}$ for all $n\in I$.

  Thus, if a finitely presented group $G$ has Dehn function $\lesssim n^2$, and separation function $o(n^{1/2})$, it must be hyperbolic.
\end{theorem}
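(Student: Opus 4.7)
The second statement follows from the first via the Gromov--Bowditch--Ol'shanskii gap theorem: any finitely presented group with $\delta_G \lesssim n^2$ either has linear Dehn function (and hence is hyperbolic) or satisfies $\delta_G \simeq n^2$; in the latter case the main statement produces an infinite set $I$ with $\sep_G(n) \gtrsim n^{1/2}$ for all $n \in I$, contradicting $\sep_G \in o(n^{1/2})$. So the real task is to prove the main statement.

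For the main statement, exact quadraticity supplies cyclic words $w_k$ of length $\ell_k \to \infty$ with $A_k := \Area(w_k) \geq c \ell_k^2$ for some $c>0$. For each $k$ I would fix a minimal area van Kampen diagram $D_k$ filling $w_k$, view it as a planar $2$-complex, and let $\phi_k : D_k \to X$ be the label-preserving map to the Cayley graph. Setting $\Gamma_k$ to be the image of the $1$-skeleton of $D_k$ under $\phi_k$, one has $\abs{V\Gamma_k} \leq M \ell_k^2$ for a constant $M$ depending only on the finite presentation and the upper constant of $\delta_G$. The theorem then reduces to the claim $\cut(\Gamma_k) \geq c' \ell_k$ for some $c' > 0$, since this gives $\sep_G(n) \gtrsim n^{1/2}$ on the infinite set $I := \{M\ell_k^2 : k \in \N\}$.

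To bound the cut from below I would argue by contradiction using iterated bisection. Suppose $\cut(\Gamma_k) \leq s$ with $s$ small. Bisect $\Gamma_k$ with a cut-set of size $\leq s$, and iterate on each piece using the hypothesised separation bound at the corresponding smaller scale. After $j$ rounds one has at most $2^j$ subgraphs of $\Gamma_k$, each of size $\leq \abs{V\Gamma_k}/2^j$. Pulling the cut-sets back through $\phi_k$ subdivides the planar disk $D_k$ into sub-diagrams $\Delta_1, \ldots, \Delta_m$ of areas $a_i$ with $\sum_i a_i = A_k$ and perimeters $\beta_i$ with $\sum_i \beta_i \leq \ell_k + L_j$, where $L_j$ is twice the total pulled-back cut length. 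The quadratic upper bound on $\delta_G$ gives $a_i \leq C\beta_i^2$, while balanced bisection forces $a_i \leq A_k/2^j$, whence $a_i \leq \beta_i \sqrt{CA_k/2^j}$. Summing,
\[ c \ell_k^2 \leq A_k = \sum_i a_i \leq \sqrt{CA_k/2^j}\,(\ell_k + L_j), \]
which forces $L_j \gtrsim 2^{j/2} \ell_k$ once $j$ is large. Tracing how $L_j$ accumulates from the cut sizes used at each scale then yields $s \gtrsim \ell_k$, as required.

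The principal obstacle is the possible non-injectivity of $\phi_k$: a cut-set of size $s$ in $\Gamma_k$ may pull back to a substantially larger set in $D_k$, inflating $L_j$ and breaking the estimate. My plan to handle this is either to exploit minimality of $D_k$ to uniformly bound the multiplicity of $\phi_k$ (via combinatorial properties of minimal diagrams over a fixed finite presentation), or, more elegantly, to rephrase the bisection entirely inside $D_k$ and use $\Gamma_k$ only as a witness of balance. A secondary issue is ensuring the pulled-back subdivision of $D_k$ yields honest planar sub-disks, which should follow from the disk topology of $D_k$ together with a Jordan-curve argument applied to the pulled-back cut.
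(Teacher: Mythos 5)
Your proposal takes a genuinely different route from the paper, and it has a gap that is not a technicality. The paper proves the theorem via two technical lemmas: Proposition~\ref{prop:hypcycles}, which uses Papasoglu's thin-bigons criterion to show that a non-hyperbolic graph contains arbitrarily long $18$-biLipschitz embedded cycles, and Lemmas~\ref{lem:minarea} and~\ref{lem:cutlowerbd}, which show that any minimal-area van Kampen diagram filling such a cycle of length $n$ has area $\gtrsim n^2$ and that its $1$-skeleton $\Gamma_n$ has $\cut(\Gamma_n) \gtrsim n$. The biLipschitz control on the boundary is used essentially and explicitly in the cut estimate: it is what lets the paper bound the combined boundary length of the components cut off by a small vertex set $S_n$ by $(18+2)|S_n|$, and hence (via minimality and the quadratic upper bound) bound their total area. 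You replace step one by an appeal to the subquadratic-Dehn gap theorem, which does legitimately produce words $w_k$ of area $\simeq \ell_k^2$, but you get no geometric control on the boundary of the filling diagram, and that control is precisely what the paper's argument is built on. Your substitute for step two, the iterated bisection of $\Gamma_k$, is not the paper's argument at all.

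The gap you flag is the genuine one, and neither of your proposed fixes closes it. The map $\phi_k : D_k \to X$ from a minimal-area filling diagram to the Cayley graph need not have uniformly bounded fibres for a general finitely presented group, so your first fix (``exploit minimality of $D_k$ to uniformly bound the multiplicity of $\phi_k$'') is a false claim, not a deferred verification. Without bounded fibres, a cut-set of $\Gamma_k = \phi_k(D_k^{(1)})$ of size $s$ can pull back to an arbitrarily large subset of $D_k$, so the estimate $\sum_i \beta_i \leq \ell_k + L_j$ with $L_j$ controlled by the cut sizes breaks; in the same way, the ``balance'' condition $a_i \leq A_k/2^j$ on face counts of the pieces of $D_k$ does not follow from the vertex-count bisection of $\Gamma_k$. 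Your second fix (``rephrase the bisection entirely inside $D_k$'') is pointing in the right direction, but that is exactly where the missing content lives: one needs some mechanism to convert a small cut of the image graph in $X$ into a small decomposition of the planar disk $D_k$, and the $18$-biLipschitz embeddedness of the boundary is what the paper uses to make that conversion work. As it stands, your proposal correctly identifies the central obstacle but does not give an argument that overcomes it.
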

The class of groups with at-most-quadratic Dehn function is rich, including: CAT(0) groups, automatic and more generally combable groups~\cite{EpsCHLPT-Wordprocessing}, and free-by-cyclic groups~\cite{BriGro-10-quad-isop-mapping-tori}. 

The main step of the proof of Theorem \ref{thm:fp+quadDehnimpliessqroot} is the following result, which may be of independent interest.

\begin{varthm}[Proposition~\ref{prop:hypcycles}.]
 Let $X$ be a connected graph. $X$ is not hyperbolic if and only if $X$ admits arbitrarily long $18$-biLipschitz embedded cyclic subgraphs.
\end{varthm}
To show this we use Papozoglou's criterion for hyperbolicity of graphs in terms of thin bigons \cite{Pap_thin_bigons}. We then prove that any diagram whose boundary is an undistorted cycle has area which is (at least) quadratic compared to its perimeter, and that its cut size is (at least) proportional to its perimeter (see Section~\ref{sec:subquad}).

\subsection*{Acknowledgements}
We are grateful to Romain Tessera for many enlightening discussions on these topics, and in particular for contributing the idea for Theorem~\ref{thm:infpresnogap}, and also thank Itai Benjamini for being a constant source of fascinating questions.

We are grateful to J\'{e}r\'{e}mie Brieussel for directing us to the reference \cite{DiekWeiss}, which led us to \cite{KusLoh05-LogicalAspects} and enabled us to simplify our original proof of Theorem~\ref{thm:finite-sep} considerably.  We also thank R\'{e}mi Coulon for the reference~\cite[Lemma 3.24]{Olsh_Osin_Sapir}.

We also thank Yves de Cornulier, Ian Agol, Derek Holt, Benjamin Steinberg, Henry Wilton, Florian Lehner, and everybody else who has contributed to the mathoverflow discussion~\cite{Hume-Q-mathoverflow-inf-pres} related to the no gap theorem for finitely generated groups. 

\section{Bounded separation}\label{sec:bdd-sep}
In this section we characterise groups with bounded separation.

\begin{varthm}[Theorem~\ref{thm:finite-sep}.]
A vertex transitive, bounded degree, connected graph $X$ has bounded separation if and only if $X$ is quasi-isometric to a tree.

In particular, a finitely generated group $G$ has bounded separation if and only if $G$ is virtually free.
\end{varthm}
\begin{proof}
Let $X$ be a vertex transitive, bounded degree, connected graph.  
By \cite[Lemma 2.3]{BenSchTim-12-separation-graphs}, if $X$ has bounded separation then all finite subgraphs of $X$ have uniformly ``bounded treewidth''.
Thus by \cite[Proof of Theorem 2.1]{BenSchTim-12-separation-graphs} (see also \cite[Theorem 3.3, Lemma 3.2]{KusLoh05-LogicalAspects}) $X$ itself has ``bounded strong treewidth'', namely there is a tree $T$ and a map $f:X\to T$ sending $VX$ to $VT$ so that if $x,y \in VX$ are adjacent then $f(x)$ and $f(y)$ are equal or adjacent in $T$, and moreover $\sup_{z \in VT} |f^{-1}(z)| < \infty$.

Using~\cite[Theorem 3.7]{KusLoh05-LogicalAspects}, this map $f:X\to T$ can be chosen so that $\sup_{z \in VT} \diam f^{-1}(z) < \infty$.
Therefore $X$ satisfies Manning's ``Bottleneck Property'' and so is quasi-isometric to a tree~\cite[Theorem 4.6]{Man05-Geom-pseudochar}.

Conversely, if $X$ is quasi-isometric to a tree it certainly has bounded separation.

Finally, a finitely generated group is quasi-isometric to a tree if and only if it is virtually free as a consequence of work of Stallings and Dunwoody, see e.g.\ \cite[Theorem 20.45]{dru-kap-18-GGT-book}.
\end{proof}

\section{A gap between constant and logarithmic separation}\label{sec:loggap}

As stated in the introduction, we claim the following gap theorem for separation.
\begin{varthm}[Theorem~\ref{thm:fpgap}.]
  A finitely presented group $G$  which is not virtually free satisfies
\[
 \sep_G(n)\gtrsim \kappa_G(n),
\]
where $\kappa_G$ is the inverse growth function $\kappa_G(n)=\max\{r\mid \abs{B_r}\leq n\}$.

	In particular, if $G$ is finitely presented, either
	\begin{itemize}
		\item $\sep_G(n) \simeq 1$ and $G$ is virtually free, or
		\item $\sep_G(n) \gtrsim \log n$ and $G$ is not virtually free.
	\end{itemize}
\end{varthm}

\begin{proof}[Proof of Theorem~\ref{thm:fpgap}]
We begin by using the accessibility of finitely presented groups to prove the theorem, assuming that it is true in the case $G$ is one-ended.

	The group $G$ is accessible so can be written as a graph of groups, where each edge group is finite and each vertex group has at most one end~\cite{Dun-85-access}.
	Each vertex group $H$ is finitely presentable: recall that a group is finitely presentable if and only if it is coarsely simply connected (e.g.~\cite[Corollary 9.55]{dru-kap-18-GGT-book}).  It follows that as $G$ is finitely presented and $H$ is a vertex group in a splitting of $G$ over finite edge groups, $H$ is finitely presentable too. 
	Also, each vertex group $H$ is undistorted in $G$, so $\kappa_H(n) \gtrsim \kappa_G(n)$.

	Now since $G$ is not virtually free, some vertex group $H$ must be one-ended, and by the discussion above it is finitely presentable and undistorted in $G$, so applying the result in the case of one-ended groups we have:
	\[
		\sep_G(n) \gtrsim \sep_H(n) \gtrsim \kappa_H(n) 
		\gtrsim \kappa_G(n).
\]
	Finally, as balls in $G$ grow at most exponentially, $\kappa_G(n) \gtrsim \log n$.
\smallskip

	It remains to show that $\sep_G(n) \gtrsim \kappa_G(n)$ when $G$ is a finitely presented, one-ended group.
This follows from the following proposition:
\begin{proposition}\label{prop-oneended-cutball}
	Let $G$ be a one-ended, finitely presented group where all relations have
	length at most $M$, and let $X$ be the corresponding Cayley graph.
	Then $\cut(B_r) \geq {r}/{400M}$, where $B_r$ denotes the ball of radius $r$
	about the identity in $X$.
\end{proposition}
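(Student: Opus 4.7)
The plan is to prove the contrapositive: if $S \subseteq B_r$ satisfies $|S| < r/(400M)$, then $B_r \setminus S$ has a connected component of size strictly greater than $|B_r|/2$, contradicting the cut condition.

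The essential ingredient is an \emph{annular connectivity} statement made possible by one-endedness together with the simple connectivity of the Cayley $2$-complex (whose $2$-cells have boundary length at most $M$). Concretely, I would establish the following lemma: for every $i \geq M$, any two vertices $u, v$ with $d(1,u) = d(1,v) = i$ can be joined by a path in the Cayley graph lying within $\{g : d(1,g) \geq i - M\}$, i.e.\ without returning deep into the ball $B_{i-1}$. One-endedness guarantees that $u$ and $v$ lie in a single unbounded component of $X \setminus B_{i-1}$; the relator-length bound $M$ then allows us to trim any outward excursion by attaching $2$-cells, keeping the path within a uniform neighbourhood of the sphere $S_i$.

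With annular connectivity in hand, I would partition the radii $\{1, 2, \dots, r\}$ into approximately $r/(4M)$ disjoint intervals of length $4M$ and call an interval \emph{clean} if no vertex of $S$ has its distance from $1$ lying in that interval. Since $|S| < r/(400M) \ll r/(4M)$, only a small fraction of intervals are unclean; the vast majority are clean. The annular connectivity lemma applied inside each clean interval shows that any two vertices at the same radius within the interval lie in a common component of $B_r \setminus S$, and moreover that vertices on the inner and outer bounding spheres of the clean annulus lie in the same component. Chaining through a stack of clean intervals spanning from near the identity out to the sphere $S_r$ yields a single component of $B_r \setminus S$ containing essentially all of each sphere $S_j$ for $j$ in a dense subset of $[0, r]$. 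A counting argument then shows this component has size exceeding $|B_r|/2$, giving the desired contradiction; the large constant $400$ is just the slack needed to absorb the bookkeeping when chaining clean intervals and accounting for radii lost to bad intervals on either side.

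The main obstacle, in my view, is proving the annular connectivity lemma rigorously. One-endedness alone only produces paths that may wander arbitrarily far outward, so controlling the radial excursion by the constant $M$ is the crux. I would attempt this with a van Kampen diagram argument: fill the loop consisting of two geodesic rays from $1$ to $u$ and to $v$, joined outside $B_{i-1}$ by a far-reaching path granted by one-endedness; then iteratively apply the $2$-cells of the filling (each of perimeter at most $M$) to push the outer arc of this loop radially inward until it lies within the $M$-neighbourhood of $S_i$, as required.
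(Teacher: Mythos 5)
Your proposal takes a genuinely different route from the paper's: you stratify by radius from the identity and argue about ``clean'' annuli, whereas the paper decomposes the putative cut-set $S$ into $8M$-coarsely connected clusters and detours around each cluster inside a bounded-radius annulus centred on \emph{the cluster itself}, then bounds the total volume of the resulting ``danger zones'' by packing three translated copies along a geodesic of length $r$ (using vertex transitivity). Unfortunately your route has two genuine gaps. First, your annular connectivity lemma is only one-sided: it keeps the detour from diving more than $M$ into the ball, but places no bound at all on the outward excursion. So even inside a clean radial interval, the detour you build may wander far out to radius $>r$ (leaving $B_r$) or wander into radii where $S$ does have points -- there is nothing in the lemma preventing the detour from meeting $S$. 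By contrast, the paper's Lemma~\ref{lem:detour-path} produces a detour in $\overline{A}(T,M,4M)$ around the cluster $T$, i.e.\ with bounded excursion in \emph{both} directions, so avoidance of $S$ and membership in $B_r$ are automatic. Making your annular lemma two-sided is not obviously doable: the van Kampen argument you sketch only controls the inward side, and a universal outward bound does not follow from one-endedness and finite presentability alone.

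Second, and more fatally, your closing counting argument does not survive exponential growth. A single cluster of roughly $100M$ points of $S$ placed at the radii $r-100M,\dots,r$ -- easily compatible with $|S|<r/400M$ once $r$ is large -- renders every radial interval touching $[r-100M,r]$ ``unclean''. In a group of exponential growth the annulus $\overline{A}(1,r-100M,r)$ can contain all but an exponentially small fraction of $B_r$, so the single big component you construct (lying at radii $\leq r-100M$ up to bookkeeping) need not exceed $|B_r|/2$; increasing the constant $400$ cannot fix this, since the problem is where $S$ sits radially, not how large $S$ is. You also do not say how to cross an unclean interval, which is exactly where the difficulty lies. The paper sidesteps both issues: the region to be discarded is $\bigcup_i U_i$, a union of metric balls around clusters of $S$ whose radii are controlled by $|S_i|$, and $|\bigcup_i U_i|\leq |B_r|/3$ is proved by laying out three disjoint translates of those balls inside $B_r$ -- a volume bound that is uniform over all growth types precisely because it compares like with like (balls with balls), rather than an annulus near the boundary with the whole ball.
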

We defer the proof of this proposition until later, but observe that for any $n$, if $r = \kappa_G(n)$ we have $|B_r| \leq n < |B_{r+1}| \simeq |B_r|$, so for $X$ the Cayley graph of $G$ the proposition gives us: 
\[
	\sep_X(n) \geq \cut(B_{r}) \geq \frac{r}{400M} \simeq \kappa_G(n).\qedhere
\]
\end{proof}

Before proving the proposition, we give a lemma which allows us to avoid
connected sets in $X$.
We denote open and closed $r$-neigh\-bour\-hoods of a set $V\subset X$, for $r \geq 0$, 
as $N(V,r) = \{ z \in X : d(z, V) < r\}$ and
$\overline{N}(V,r) = \{ z \in X : d(z, V) \leq r\}$, respectively.
We denote closed annuli around $V$ as $\overline{A}(V,r,R) = \overline{N}(V,R) \setminus N(V,r)$ for $0\leq r\leq R$.
\begin{lemma}\label{lem:detour-path}
	Let $X$ be the Cayley graph of a one-ended group,
	where all relations have length at most $M$.
	Let $T$ be a bounded subset of $X$ which is $8M$-coarsely connected, i.e.\ for any $x,y \in T$ there exists a chain of points $x=x_0,x_1,\ldots,x_n=y$ with each $d(x_i,x_{i+1}) \leq 8M$.
	
	Suppose we have points $x, y \in X$ with $d(x,T), d(y,T)=4M$,
	and so that $x$ and $y$ can be connected to 
	$X \setminus N(T, 8M + \frac{1}{2}\diam(T))$
	inside $\overline{A}(T, 4M, 8M + \frac{1}{2}\diam(T))$
	by paths $\gamma_x$ and $\gamma_y$,
	respectively.
	
	Then there exists a path joining $x$ to $y$ in $\overline{A}(T, M, 4M)$.
\end{lemma}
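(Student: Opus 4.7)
The strategy is to construct a loop from $x$ going around $T$ to $y$ and back, fill it using the finite presentation, and extract the required path by a planar topology argument on the filling disk.

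First, build the loop. By one-endedness of $X$, the complement of the bounded set $\overline{N}(T, 8M + \tfrac12 \diam T)$ has a unique infinite component, so the endpoints of $\gamma_x$ and $\gamma_y$ outside $N(T, 8M + \tfrac12 \diam T)$ may be joined by a path $\gamma$ in $X \setminus N(T, 8M + \tfrac12 \diam T)$. Setting $\alpha := \gamma_x \cdot \gamma \cdot \gamma_y^{-1}$ gives a path from $x$ to $y$ with every vertex at $T$-distance $\geq 4M$. For the near side, pick $a, b \in T$ with $d(x,a) = d(y,b) = 4M$, use the $8M$-coarse connectedness of $T$ to obtain a chain $a = t_0, \ldots, t_k = b$ with each $d(t_i, t_{i+1}) \leq 8M$, and concatenate geodesic segments $x \to a \to t_1 \to \cdots \to t_k \to y$. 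Each segment has length $\leq 8M$ with endpoints in $T \cup \{x, y\}$, so by the triangle inequality the resulting path $\beta$ stays in $\overline{N}(T, 4M)$. Let $L := \alpha \cdot \overline{\beta}$.

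Second, since $G$ is finitely presented with relators of length $\leq M$, the Cayley $2$-complex is simply connected, so $L$ bounds a van Kampen diagram: a topological disk $D$ with a combinatorial map $\phi : D \to X$ whose boundary realises $L$ and which sends $2$-cells to relator discs. Define $h : V(D) \to \N$ by $h(v) := d(\phi(v), T)$. Then $h$ is $1$-Lipschitz on edges of $D$, and its values on the vertices of any single $2$-cell of $D$ differ by at most $M/2$ (since $\phi$ sends each $2$-cell to a disc of boundary length $\leq M$, hence diameter $\leq M/2$, in $X$). In particular, no $2$-cell of $D$ has both a vertex with $h < M$ and one with $h > 4M$.

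Third, extract the path. Let $K^+$ (resp.\ $K^-$) be the closed subcomplex of $D$ consisting of $2$-cells with some vertex of $h$-value $> 4M$ (resp.\ $< M$). These are disjoint by the previous step, and $K^-$ meets $\partial D$ only inside the $\beta$-arc (where $h$ drops below $M$), while $K^+$ meets $\partial D$ predominantly in the interior of the $\alpha$-arc. The complementary sub-region $D \setminus (K^+ \cup K^-)$ consists of $2$-cells all of whose vertices have $h \in [M, 4M]$; provided this sub-region contains $x$ and $y$, planar arc-connectivity supplies an edge-path between them inside the sub-region, whose image under $\phi$ gives the required path in $\overline{A}(T, M, 4M)$.

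The main obstacle is the boundary interface near $x$ and $y$: since $h(x) = h(y) = 4M$ sits exactly on the upper threshold, $2$-cells incident to $x$ or $y$ can have interior vertices with $h > 4M$, placing $x, y$ inside $K^+$; similarly, $\beta$-vertices close to $x, y$ can leak into $K^+$ by sharing a $2$-cell with an interior high vertex. A short buffer fix---prepending and appending the initial few $\beta$-edges at $x, y$ to exit this ``high-leak'' zone and reach $\beta$-vertices with $h$ safely below $7M/2$, then running the planar separation argument between the interior points so reached---combined with the elementary topological fact that a disk minus two disjoint closed subcomplexes attached to two disjoint boundary arcs is arc-connected between any pair of boundary points outside those arcs, resolves this.
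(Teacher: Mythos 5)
Your overall strategy (build a loop around $T$, fill with a van Kampen diagram, extract the detour via a level-set/planar argument on $f(v)=d(\varphi(v),T)$) is the same as the paper's. However, there is a genuine gap in the first step. You claim that, by one-endedness, the endpoints $x',y'$ of $\gamma_x,\gamma_y$ ``may be joined by a path in $X\setminus N(T,8M+\tfrac12\diam T)$.'' One-endedness guarantees only that $X\setminus N(T,8M+\tfrac12\diam T)$ has a \emph{unique unbounded} component; it does not rule out bounded complementary components, and $x'$ (which sits exactly at distance $8M+\tfrac12\diam T$ from $T$) could easily lie in such a bounded pocket, in which case no path of the kind you want exists. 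The paper avoids exactly this problem by first extending $x'$ and $y'$ along bi-infinite geodesic lines: one of the two rays from $x'$ must stay outside $N(T,4M)$ (this is where the choice $8M+\tfrac12\diam T$ is used, via a short triangle-inequality estimate), and an infinite ray necessarily has a last exit point $x''$ from $\overline N(T,8M+\tfrac12\diam T)$, which \emph{is} in the unbounded component. Only then is one-endedness invoked to join $x''$ to $y''$.

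A secondary concern is the planar extraction step. Your two-threshold decomposition into $K^{+}$ (a vertex with $h>4M$) and $K^{-}$ (a vertex with $h<M$) and the appeal to ``the elementary topological fact that a disk minus two disjoint closed subcomplexes attached to two disjoint boundary arcs is arc-connected between any pair of boundary points outside those arcs'' needs real care: $K^+$ in your picture meets $\partial D$ not only along $\alpha$ but also along intervals of $\beta$ near $x$ and $y$ (you acknowledge this), so its boundary trace is not a single arc until you engineer the buffer, and the ``fact'' itself is neither stated precisely nor proved. The paper sidesteps all of this with a single threshold: set $D'$ to be the cells containing a vertex with $f\geq 2M$, let $D''$ be the connected component of $x$ in $D'$, and follow the outer boundary $\partial_O D''$ from $x$ along the near arc until it first hits the far side of $\partial D$; since the whole far arc $\gamma_{yx}$ lies in $\partial_O D''$, such a hitting point exists, it must lie on the near arc (continuity forces $f\leq 2M$ there), and the traversed path automatically has $f\in[M,4M]$. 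This connected-component trick is what makes the extraction clean; I'd recommend replacing the two-threshold/buffer scheme with it.
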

The proof of this lemma follows \cite[Lemma 6.6]{Mac-Sis-11-relhypplane} quite closely.
\begin{proof}
	Let $x', y'$ be the other endpoints of $\gamma_x, \gamma_y$, with $d(x',T), d(y',T)=8M+\frac{1}{2}\diam(T)$.
	
	As $X$ is vertex transitive and bounded degree, there exists an infinite geodesic line $\alpha: \R \ra X$ through $x'$,
	with $\alpha(0)=x'$.  
	We claim that either $\alpha |_{(-\infty, 0]}$ or $\alpha |_{[0, \infty)}$
	gives a geodesic ray from $x'$ to infinity outside $N(T, 4M)$.
	If not, we have $z, z' \in \alpha$ on either side of $x'$ with $d(z,T), d(z',T) < 4M$,
	so $d(z,z') < 8M + \diam(T)$.
	On the other hand,
	$d(z,z') = d(z,x')+d(x',z') \geq 2(d(x',T) - 4M)$,
	thus $d(x',T) < 8M + \frac{1}{2}\diam(T)$, a contradiction.
	
	Now let $\alpha_x, \alpha_y$ be the geodesic rays from $x',y'$ which do not enter $N(T,4M)$.
	Let $x'', y''$ be the last time these rays leave $N(T, 8M + \frac{1}{2}\diam(T))$.
	By one-endedness, we can join $x'', y''$ by a simple path $\beta'$
	outside $N(T, 8M + \frac{1}{2}\diam(T))$.
	
	Let $\beta_1$ be the path outside $N(T, 4M)$ which
	starts at $x$, then follows
	$\gamma_x$ to $x'$, $\alpha_x$ to $x''$, $\beta'$ to $y''$,
	$\alpha_y$ to $y'$, $\gamma_y$ to $y$.  
	Remove loops from $\beta_1$ to make it simple, keeping the same endpoints.
	
	Let $\beta_2$ be a path inside $\overline N(T, 4M)$ which
	starts at $x$, then follows a geodesic of length $4M$ to $T$,
	then follows geodesics of length $\leq 8M$ from point to point in $T$,
	then follows a geodesic of length $4M$ to $y$.
	Again, remove loops to make $\beta_2$ simple with the same endpoints.
	If having done so $\beta_2$ does not enter $N(T,M)$, then $\beta_2 \subset \overline{A}(T,M,4M)$ serves as our desired path, so we may assume that $\beta_2 \cap N(T,M) \neq \emptyset$; let $z$ be the last vertex of $\beta_2$ with $d(z,T)<M$.
	
	Together, $\beta = \beta_1 \cup \beta_2$ give a loop in $X$.
	To prove the Lemma it suffices to consider the case when $\beta_1 \cap \beta_2 = \{x,y\}$,
	i.e., this loop is simple.
	
	Since $\beta$ represents the identity in $G$, there is a van Kampen diagram 
	$D$ for $\beta$, that is, a contractible $2$-complex $D$ in the plane labelled by a combinatorial map $\varphi$ from $D$ into the Cayley $2$-complex of $G$, so that the boundary $\partial D$ of $D$ maps to $\beta$.
	In this case, as $\beta$ is simple, $D$ is a topological disc.

	Consider the function $f(\cdot):=d(\varphi(\cdot),T)$ defined on the $1$-skeleton $D^{(1)}$ of $D$, which $\varphi$ maps into $X$.
	On $\partial D$, we have $f(x)=f(y)=4M$, and $f(z)<M$ for $z \in \beta_2 \cup \partial D$ given above.
	We consider $\partial D$ as split into three subarcs, $\gamma_{xz}$ between $x$ and $z$, $\gamma_{zy}$ between $z$ and $y$, and $\gamma_{yx}$ between $y$ and $x$; note that $\varphi(\gamma_{yx})=\beta_1$ and $\varphi(\gamma_{xz}\cup\gamma_{zy})=\beta_2$.
	On $\gamma_{xz}\cup\gamma_{zy}$ we have $f \leq 4M$,
	and on $\gamma_{yx}$ we have $f \geq 4M$.

	Let $D' \subset D$ be the union of closed $2$-cells $F \subset D$ which
	have a point $u \in F \cap X$ with $f(u) \geq 2M$.
	
	Let $D''$ be the connected component of $x$ in $D'$.
	Let $\partial_O D''$ be the outer boundary path of $D''$, considering $D''$ as a subcomplex of the plane (and ignoring any bounded regions it encloses).
	Every point $p$ in $\partial_O D''$ satisfies $p\in\partial D$ or $f(p)<2M$, or both.
	
	Consider the path that follows $\partial_O D''$ from $x$ starting along $\gamma_{xz}$ and continues until it hits a point $p$ of $\gamma_{zy}\cup\gamma_{yx}$; as $\gamma_{yx} \subset \partial_O D''$ such a point exists.
	By continuity $f(p)\leq 2M$, so $p \in \gamma_{zy}$, and we can continue from $p$ along $\gamma_{zy}$ to $y$.  Along this entire path $f \in [2M-M, 4M]$, i.e.\ we have found our path in $\overline{A}(T,M,4M)$.
\end{proof}

We can now show that balls in one-ended groups are at least a little hard to cut.
\begin{proof}[Proof of Proposition~\ref{prop-oneended-cutball}]
	Let $S \subset B_r$ be given with $|S| < r/400M$.
	We will show that
	$B_r \setminus S$ must have a connected component of size $> |B_r|/2$.
	
	Let $\sim$ be the equivalence class on $S$ generated by requiring $p \sim q$ 
	if $d(p,q) \leq 8M$.
	Let $S = S_1 \sqcup \cdots \sqcup S_k$ be the decomposition of $S$ into
	equivalence classes.
	Let $V_1 = \overline{N}(S_1, 4M), \ldots, V_k = \overline{N}(S_k, 4M)$,
	and observe that $S_i$ is $8M$--coarsely connected in $V_i$.
	Note too that for $i \neq j$, $V_i \cap V_j = \emptyset$.
	
	Let $U_1, \ldots, U_k$ be given by $U_i = N(S_i, 12M + \diam(S_i)))$.
	We claim that given $p, q$ in $B_r$ outside $\bigcup_i U_i$, we can join $p$ to $q$ in $B_r \setminus S$:
	
	Consider the oriented path $\gamma_0 = [p,1] \cup [1,q]$.
	We modify $\gamma_0$ by following along $\gamma_0$ and considering each $V_{i}$
	which it meets.  Observe that every $V_{i}$ which it meets lies in $B_r$,
	for, supposing $V_i \cap [1,p] \neq \emptyset$,
	\begin{align*}
		d(1,V_i)+\diam(V_i) 
		& \leq d(1,p) - d(p,V_i) + \diam(V_i) 
		\\ & \leq d(1,p) - \left(d(p,S_i)-4M\right) +\left(\diam(S_i)+8M\right)
		\\ & \leq d(1,p) \leq r.
	  \end{align*}
	Suppose $\gamma_0$ first meets $V_{i_1}$.
	Using Lemma~\ref{lem:detour-path} applied to $T = S_{i_1}$,
	reroute $\gamma_0$ in $\overline{A}(S_i, M, 4M) \subset V_{i_1}$
	from the first	time $x$ it reaches $\overline{N}(S_i, 4M)$ to the last time $y$
	it leaves $\overline{N}(S_i, 4M)$.
	Continue for the next $V_{i_2}$ which it reaches, 
	all the way until one reaches $q$,
	and call this new path $\gamma_1$, which has our desired property:	
	since $\gamma_1$ avoids every $S_i$, it avoids $S$.

	It remains to show that $\bigcup_i U_i$ is a small set.
	Each $U_i$ lives in a ball of radius $r_i = 12M+2\diam(S_i) \leq 12M + 16M|S_i|$.
	The total diameter of these balls is
	\[
		\leq 2 \sum_i \big( 12M + 16M|S_i| \big)
		\leq 24M |S| + 32M |S| < 56 M \cdot \frac{r}{400M} \leq \frac{r}{6}.
	\]
	Take a geodesic segment $\gamma'$ in $B_r$ from $1$ of length $r$.
	We can lay out three disjoint copies of these balls along the segments
	$[0,r/6]$, $[r/3,r/2]$, $[2r/3,5r/6]$,
	and so $| \bigcup_i U_i | \leq \frac{1}{3} |B_r|$.
\end{proof}

\begin{remark}
	A variation of the proof of Theorem~\ref{thm:fpgap} shows that the bound 
	\[
	\sep_X(n) \gtrsim \kappa_X(n) := \max\{k \,|\, \exists x : |B_k(x)|\leq n\}
	\]
	holds for any one-ended, vertex transitive graph $X$ that is coarsely simply connected and of bounded degree. It is quite conceivable that the `vertex transitive' and `one-ended' assumptions can be weakened.
\end{remark}

\section{No gap for finitely generated groups}\label{sec:fg-no-gap}

Here we prove that there cannot be a gap theorem near bounded separation for finitely generated, infinitely presented groups. The key ingredient is families of epimorphisms
\[
 \langle\alpha_0,t\mid\ \rangle \to G_1 \to G_2 \to \ldots \to G_n \to \ldots G
\]
satisfying the following three properties:
\begin{itemize}
\item for each $i\in \N$, $G_i$ is virtually free,
\item $G$ is not virtually free,
\item having already fixed $\langle\alpha_0,t\mid\ \rangle \to \ldots \to G_n$ for any $r$ we may choose $G_{n+1}$ so that the homomorphism $G_n\to G_{n+1}$ is injective on balls of radius $r$ measured with respect to the generating set (the image of) $\{\alpha_0,t\}$.
\end{itemize}

Such a construction appears in \cite[Lemma $3.24$]{Olsh_Osin_Sapir}. The elementary amenable groups constructed are denoted $G(p,\mathbf{c})$ where $p$ is a prime and $\mathbf{c}$ is an infinite sequence of natural numbers which grows sufficiently quickly. The intermediate groups $G_n$ are determined uniquely by $p$ and the finite subsequence $(c_1,\ldots,c_n)$. We now show that within this collection of groups one can construct groups with unbounded but arbitrarily small separation profile.

\begin{varthm}[Theorem~\ref{thm:infpresnogap}.] 
 Let $\rho:\N\to\N$ be an unbounded non-decreasing function.
 There is a sequence $\mathbf{c}=(c_i)_{i\in\N}$ such that $G(p,\mathbf{c})$, which is not virtually free, satisfies
\[
1 \not\simeq \sep_{G(p,\mathbf{c})}(n) \quad\text{and}\quad 
  \sep_{G(p,\mathbf{c})}(n) \not \gtrsim \rho(n).
\]
\end{varthm}
\begin{proof} We will build the desired group by constructing a sequence $\mathbf{c}$ which grows sufficiently quickly. The choice of prime will not matter in our construction. Throughout we consider groups as metric spaces with respect to the generating set $\{\alpha_0,t\}$ (strictly speaking, the image of $\{\alpha_0,t\}$ in each group $G_k, G(p,\mathbf{c})$).

Fix $c_1=1$. The corresponding group $G_1$ is virtually free, so $\sep_{G_1}\leq M_1$ for some constant $M_1$. Choose $c_2$ sufficiently large for the construction \cite[Lemma 3.24]{Olsh_Osin_Sapir} and also large enough so that $G_1\to G_2$ is injective on balls of radius $2l_1$ where $\rho(l_1)\geq M_1^2$.

For each $k\geq 2$ in turn, $G_k$ is virtually free, so $\sep_{G_k}\leq M_k$ for some constant $M_k$. Choose $l_{k+1}>l_k$ so that $\rho(l_{k+1})\geq \max\{2l_k, M_k^2\}$, then choose $c_{k+1}$ sufficiently large for the construction \cite[Lemma 3.24]{Olsh_Osin_Sapir} and also large enough so that $G_k\to G_{k+1}$ is injective on balls of radius $2l_{k+1}$. We now bound $\sep_{G(p,\mathbf{c})}$.

Let $\Gamma$ be a connected subgraph of $G(p,\mathbf{c})$ with at most $l_k$ vertices, so it has diameter at most $l_k$. The map $G_k\to G(p,\mathbf{c})$ is injective on balls of radius $2l_k$ so $\Gamma$ is a connected subgraph of $G_k$. Thus
\[
 \sep_{G(p,\mathbf{c})}(l_k) = \sep_{G_k}(l_k) \leq M_k \leq \rho(l_k)^{\frac12}.
\]
Hence $\sep_{G(p,\mathbf{c})}(n) \not \gtrsim \rho(n)$. The fact that $\sep_{G(p,\mathbf{c})}(n)\not\simeq 1$ is immediate from Theorem \ref{thm:finite-sep} because $G(p,\mathbf{c})$ is not finitely presentable, and therefore not virtually free.
\end{proof}

\section{Small separation and hyperbolicity}\label{sec:subquad}
  In this section we show the following.

\begin{varthm}[Theorem~\ref{thm:fp+quadDehnimpliessqroot}] Let $G$ be a finitely presented group with (exactly) quadratic Dehn function. Then there is an infinite subset $I\subseteq \N$ such that $\sep_G(n)\gtrsim n^{1/2}$ for all $n\in I$.

  Thus, if a finitely presented group $G$ has Dehn function $\lesssim n^2$, and separation function $o(n^{1/2})$, it must be hyperbolic.
\end{varthm}

One of the main steps is the following result which may be of independent interest.

\begin{proposition}\label{prop:hypcycles}
Let $X$ be a connected graph. $X$ is hyperbolic if and only if there is some $N$ such that every $18$-bi-Lipschitz embedded cyclic subgraph in $X$ has length at most $N$.
\end{proposition}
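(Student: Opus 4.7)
The plan is to prove both implications, the forward one by standard quasi-geodesic stability in hyperbolic spaces, and the reverse one by extracting a long bi-Lipschitz cycle from a fat bigon supplied by Papasoglu's criterion.

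For the forward direction, suppose $X$ is $\delta$-hyperbolic and let $C\subset X$ be an $18$-bi-Lipschitz embedded cyclic subgraph of length $L$. The inclusion $C\hookrightarrow X$ is a $(18,0)$-quasi-isometric embedding of a metric circle of circumference $L$. A standard Morse-lemma argument (or equivalently, passing to asymptotic cones, where $X$ becomes an $\mathbb{R}$-tree while the rescaled cycle would survive as a non-degenerate quasi-isometric image of $S^1$ if $L$ were unbounded, a contradiction) bounds $L$ above by a constant $N=N(\delta)$.

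For the reverse direction, suppose $X$ is not hyperbolic. By Papasoglu's thin-bigon criterion, for every $k$ there is a geodesic bigon $\alpha,\beta\colon[0,L]\to X$ from a common $p$ to a common $q$ with $d(\alpha(t_0),\beta(t_0))\geq k$ for some $t_0$. Using that $\alpha,\beta$ are geodesics from $p$, one has $d(\alpha(t_0),\beta(s))\geq\max\{|s-t_0|,\,k-|s-t_0|\}\geq k/2$ for every $s$, so $d(\alpha(t_0),\beta)\geq k/2$. Given a target length $\Lambda$, we choose the bigon so that $d(\alpha(t_0),\beta)\geq \Lambda$; then $f(t):=d(\alpha(t),\beta)$ is a $1$-Lipschitz function with $f(0)=f(L)=0$ satisfying $f(t_0)\geq\Lambda$, so $f\geq 7\Lambda/8$ throughout the interval $I:=[t_0-\Lambda/8,\,t_0+\Lambda/8]$.

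To produce the desired cycle, for $t\in I$ let $y(t)=\beta(s(t))$ be a nearest point to $\alpha(t)$ on $\beta$; the triangle inequality $|s(t)-t|=|d(p,\beta(s(t)))-d(p,\alpha(t))|\leq f(t)$ holds, and $s(\cdot)$ is easily seen to be coarsely monotone. The proposed cycle $C$ is the concatenation of $\alpha|_I$, the geodesic from $\alpha(t_0+\Lambda/8)$ to $y(t_0+\Lambda/8)$, the sub-arc of $\beta$ from $y(t_0+\Lambda/8)$ back to $y(t_0-\Lambda/8)$, and the geodesic from $y(t_0-\Lambda/8)$ to $\alpha(t_0-\Lambda/8)$; after removing any self-intersections, its total length is $\Theta(\Lambda)$.

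The hard part will be verifying that $C$ is $18$-bi-Lipschitz. Pairs inside a single geodesic segment satisfy $d_X=d_C$ trivially, and for pairs crossing between the $\alpha$- and $\beta$-subarcs the uniform fatness $f\geq 7\Lambda/8$ on $I$ yields the required lower bound on $d_X$. The trickiest case is when one endpoint lies on one of the two connector geodesics; here the constant $18$ is pinned down by a case-analysis combining the triangle inequality, the bound $|s(t)-t|\leq f(t)$, and the fatness of the bigon. If this direct construction fails in some subcase, the back-up plan is a minimum-length argument over the family of geodesic-polygonal cycles in $X$ containing two marked points at $X$-distance $\geq\Lambda/2$: the inequality $\mathrm{diam}(C_1)+\mathrm{diam}(C_2)\geq\mathrm{diam}(C)$ applied to any shortcut shows that one of the two resulting sub-cycles still contains a far-apart pair, and minimality then forces the bi-Lipschitz property.
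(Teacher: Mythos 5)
Your forward direction is fine, and for the reverse direction you and the paper share the same skeleton: invoke Papasoglu's thin-bigon criterion, extract a fat geodesic bigon, and cut out a quadrilateral whose two short sides connect $\alpha$ to $\beta$. But the crux of the proposition is precisely the verification that the quadrilateral is $18$-bi-Lipschitz, and this is exactly the part you defer, writing only that ``the constant $18$ is pinned down by a case-analysis.'' As stated the construction does not obviously give a bi-Lipschitz cycle, for two concrete reasons. First, unless you take $\Lambda$ to equal $\max_t f(t)$ (you only require $f(t_0)\geq\Lambda$), the connector geodesics have length $f(t_0\pm\Lambda/8)$ which can be arbitrarily large compared to the $\alpha$-arc of length $\Lambda/4$; two points far along the two connectors can then lie $\gg\Lambda$ apart in the cycle but be arbitrarily close in $X$, since nothing keeps the connectors from converging far from $\alpha$. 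Second, even after fixing $\Lambda=\max f$, the corners $\alpha(t_0\pm\Lambda/8)$ can be ``thin'': for $x$ a distance $a$ along a connector and $y=\alpha(t_0-\Lambda/8+b)$ with $a\approx b$ small, the only lower bounds you exhibit for $d_X(x,y)$ are of the form $|a-b|$ and $a-\Lambda/8$, both of which can be $0$ while $d_C(x,y)=a+b$ is positive; nothing forces the connector geodesic to leave $\alpha$ transversally. This is the configuration the paper works hard to rule out: it chooses $l,l'$ \emph{infimal} subject to $l\geq 2\,d_X(\gamma(k-l),\gamma')$ (and likewise for $l'$), which both caps the connector length at half the arc length and, via infimality, gives a quantitative lower bound on the distance from $\gamma$ to $\gamma'$ near the corners. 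You have no analogue of this ratio condition; you fix the arc endpoints at $t_0\pm\Lambda/8$ in advance, which does not control the corner geometry. You also assert without argument that the projection $s(\cdot)$ is ``coarsely monotone,'' which is a hyperbolicity phenomenon and is not available here.

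Your back-up plan does not obviously close either. Cutting a cycle $C$ at a shortcut $[x,y]$ into $C_1,C_2$ gives $\operatorname{diam}(C)\leq\operatorname{diam}(C_1)+\operatorname{diam}(C_2)$, so one side has diameter $\geq\operatorname{diam}(C)/2$; if your family is ``cycles containing a pair at distance $\geq\Lambda/2$,'' the piece you keep is only guaranteed a pair at distance $\geq\Lambda/4$, so it need not lie in the same family and minimality gives no contradiction. You would need a sharper tracking of the marked pair (or a different normalisation) to make the length-minimality argument bite, and you would also have to address existence of minimisers and the possibility that the shortcut re-intersects $C$. Both of your arguments have the right flavour, but neither one, as written, establishes a uniform bi-Lipschitz constant; the explicit ratio-based choice of $l,l'$ in the paper's proof is the missing ingredient.
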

By an $18$-bi-Lipschitz embedded cyclic subgraph of length $N$ we mean a cycle $\alpha$ in $X$ so that for any $x,y \in \alpha$, $\frac{1}{18}d_\alpha(x,y) \leq d_X(x,y) \leq d_\alpha(x,y)$, where $d_\alpha$ and $d_X$ are the distances in $\alpha$ and $X$ respectively.
\begin{proof}
  Firstly, if there exist arbitrarily long $18$-bi-Lipschitz embedded cyclic subgraphs in $X$ then it is not hyperbolic. To complete the proof we will show that any non-hyperbolic graph contains arbitrarily large $18$-biLipschitz embedded geodesic quad\-ri\-lat\-er\-als.  

We use Papazoglou's criterion for hyperbolicity of graphs, namely, a graph is hyperbolic if and only if every geodesic bigon is thin~\cite[Theorem 1.4]{Pap_thin_bigons}.

Assume $X$ is not hyperbolic, so for every $M$ there exist finite geodesics $\gamma,\gamma'$ with common endpoints such that the Hausdorff distance between them equals some $n \geq M$.  
Fix $k$ such that $d_X(\gamma(k),\gamma')=n$, swapping $\gamma, \gamma'$ if necessary.
 
Choose $l,l'$ infimal such that
\begin{equation}\label{eq:ratio-}
 \frac{l}{d_X(\gamma(k-l),\gamma')} \geq 2, \quad \frac{l'}{d_X(\gamma(k+l'),\gamma')} \geq 2.
\end{equation}
Let $\gamma_1$ be the subarc of $\gamma$ between $\gamma(k-l)$ and $\gamma(k+l')$.
Let $\beta_1$ be a geodesic from $\gamma(k-l)$ to a closest point in $\gamma'$,
and let $\beta_2$ be a geodesic from $\gamma(k+l')$ to a closest point in $\gamma'$.
Let $\gamma_2$ be the subarc of $\gamma'$ between the endpoints of $\beta_1$ and $\beta_2$.

Since the Hausdorff distance between $\gamma, \gamma'$ is $n$ we have
$l-1 < 2 d_X(\gamma(k-l+1),\gamma') < 2n$ by (\ref{eq:ratio-}), so $l \leq 2n$, and likewise $l'\leq 2n$.
As $d_X(\gamma(k),\gamma')=n$ we have $l \geq 2n/3$, else a contradiction follows from  
\[
	2d_X(\gamma(k-l),\gamma') > 2(n-2n/3) > l;
\]
 likewise $l' \geq 2n/3$.
As the lengths $\abs{\beta_1}$, $\abs{\beta_2}$ of $\beta_1,\beta_2$ satisfy $2\abs{\beta_1}\leq l, 2\abs{\beta_2}\leq l'$, we have 
\[
d_X(\beta_1,\beta_2) \geq l+l' - \abs{\beta_1} - \abs{\beta_2} \geq \frac{1}{2} (l+l') \geq \frac{2n}{3}.
\]

Now we provide a lower bound on $d_X(\gamma_1,\gamma_2)$. 
For $a \in [0,l]$ we have $d_X(\gamma(k-a),\gamma') \geq d_X(\gamma_1(k),\gamma')-a=n-a$.  On the other hand, by \eqref{eq:ratio-} we have
$d_X(\gamma(k-a),\gamma')>a/2$, so combining these cases with the similar calculation for $d_X(\gamma(k+a),\gamma')$, we find
\[
	d_X(\gamma_1,\gamma_2) \geq \min_a \max\{n-a, \frac{a}{2}\}=\frac{n}{3}.
\]

Let $\alpha$ be the quadrilateral $\gamma_1,\beta_2,\gamma_2,\beta_1$ with distance $d_\alpha$.  As $\alpha$ has length at most $12n$, if $x,y$ are in $\gamma_1,\gamma_2$, or in $\beta_1,\beta_2$, we have
\[
	d_X(x,y) \geq \frac{n}{3} \geq \frac{1}{18} d_\alpha(x,y).
\]

Suppose now $x \in \beta_1$ and $y \in \gamma_1$; reparametrize $\beta_1$ and $\gamma_1$ so that $\beta_1(0)=\gamma_1(0)$, and fix $a,b$ so that $x=\beta_1(a), y=\gamma_1(b)$, so $d_\alpha(x,y)=a+b$.
If $b\geq l$ then $d(x,y) \geq l-|\beta_1| \geq l/2 \geq n/3$, so we have the lower bound as before.
Thus we may assume $b<l$.
Let $c=d_X(x,y)$.
Suppose for a contradiction that $c < \frac{1}{8}(a+b)$.
By \eqref{eq:ratio-} applied to $y$ we have
\[
	l-b < 2d_X(\gamma(k-l+b),\gamma') \leq 2(d_X(y,x)+d_X(x,\gamma'))
	= 2(c+|\beta_1|-a).
\]
As $2|\beta_1|\leq l$, we have $-b<2c-2a$, thus
\[
	2a < b+2c < b+\frac{a+b}{4} 
	\Rightarrow a < \frac{5}{7}b
\]
so 
\[ c \geq b-a \geq \frac{2}{7}b = \frac{2b}{7b+7a}(a+b) > \frac{2b}{12b}(a+b) =\frac{1}{6}(a+b),
\]
contradicting $c<\frac{1}{8}(a+b)$.

The final case is $x \in \beta_1$ and $y \in \gamma_2$.
Parametrise $\beta_1, \gamma_2$ so that $\beta_1(0)=\gamma_2(0)$, $x=\beta_1(a), y=\gamma(b)$, and let $c=d_X(x,y)$.
If $a \geq b/2$ then as $\beta_1$ is a shortest path to $\gamma'$, 
$c \geq a = \frac{a}{3}+\frac{2a}{3} \geq \frac{1}{3}(a+b)$.
If $a < b/2$, then by the triangle inequality
$c \geq b-a=\frac{b}{3}+\frac{b/2}{3}+(\frac{b}{2}-a)\geq \frac{1}{3}(a+b)$.
\end{proof}

\begin{remark} Since a geodesic metric space is hyperbolic if and only if every $3$-biLipschitz geodesic is uniformly close to any geodesic with the same endpoints \cite[Proposition 3.2]{CordesHume-MMB}, the above proof can easily be adapted to the setting of general geodesic metric spaces again producing $N$-biLipschitz embedded cycles in any non-hyperbolic space with $N$ some universal constant.
\end{remark}

\begin{lemma}\label{lem:minarea} Let $G$ be a finitely presented group, let $A$ be a finite symmetric generating set of $G$ and let $X=\textrm{Cay}(G,A)$. For every $K\geq 1$ there exists an $L=L(K)$ such that any diagram whose boundary is a $K$-biLipschitz embedded cyclic subgraph of $X$ with boundary length $n$ has area at least $L^{-1}n^2$.
\end{lemma}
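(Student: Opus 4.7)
The plan is to exhibit many long, pairwise edge-disjoint paths across the diagram $D$, whose combined length then forces the area to be quadratic. Let $M$ be the maximum length of a relator in a fixed finite presentation of $G$, so that every $2$-cell of $D$ has perimeter at most $M$. We may assume $D$ is a topological disk (any $1$-dimensional ``spur'' adds no area and may be removed without changing the boundary). The labelling map $\varphi\colon D\to X$ is $1$-Lipschitz and $\varphi|_{\partial D}$ parameterises the given $K$-biLipschitz cycle in $X$, so for any two boundary vertices $p,q$ we have $d_D(p,q)\geq d_X(\varphi(p),\varphi(q))\geq d_{\partial D}(p,q)/K$. Hence $\partial D$ is $K$-biLipschitz embedded in $D$ with its intrinsic metric.

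Partition $\partial D$ into four arcs $\alpha_1,\alpha_2,\alpha_3,\alpha_4$ in cyclic order, each of length $\lfloor n/4\rfloor$. For the two pairs of opposite arcs one has $d_{\partial D}(\alpha_i,\alpha_{i+2})\geq n/4-1$, and therefore
\[ d_D(\alpha_i,\alpha_{i+2})\geq (n-4)/(4K). \]

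The heart of the argument is the claim that the minimum edge-cut in $D$ separating $\alpha_1$ from $\alpha_3$ has size at least $cn/(KM)$ for some absolute constant $c>0$. Since $D$ is a topological disk, any minimum such cut $S$ corresponds, by planar duality, to a simple topological arc $\gamma$ in $D$ from some point of $\alpha_2$ to some point of $\alpha_4$ which crosses precisely the $|S|$ edges of $S$. This arc passes in turn through $|S|+1$ consecutive $2$-cells; inside each such cell we replace the traversing segment of $\gamma$ by a segment along the cell's boundary of combinatorial length at most $M/2$. Concatenating yields an edge-path in the $1$-skeleton of $D$ from $\alpha_2$ to $\alpha_4$ of length at most $(|S|+1)M/2$. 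Comparing with the lower bound $d_D(\alpha_2,\alpha_4)\geq (n-4)/(4K)$ gives $|S|\geq n/(2KM)-O(1)$, so $|S|\geq n/(4KM)$ as soon as $n\geq C_1KM$ for some absolute constant $C_1$.

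Menger's theorem, applied to the disjoint vertex sets $\alpha_1,\alpha_3$, now furnishes at least $n/(4KM)$ pairwise edge-disjoint paths in $D$ from $\alpha_1$ to $\alpha_3$; each such path has length at least $d_D(\alpha_1,\alpha_3)\geq n/(8K)$. The $1$-skeleton of $D$ therefore contains at least $n^2/(32K^2M)$ distinct edges. The face-perimeter identity $\sum_F|\partial F|=2E-n\leq MF$ then yields $\Area(D)=F\geq n^2/(64K^2M^2)$ for all sufficiently large $n$, and for the finitely many small $n$ one simply enlarges $L$ so that $n^2/L\leq 1$, obtaining the required bound $\Area(D)\geq L^{-1}n^2$ with $L=L(K)$ of order $K^2M^2$ (where $M$ depends only on $G$). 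The main technical point to justify carefully is the planar-duality step: one has to check that a minimum edge-cut in a disk diagram between two disjoint boundary arcs really does arise as a single transverse arc joining the two complementary arcs, which can fail in the presence of cut-vertices or multiple edges, but is straightforwardly handled by a standard subdivision of the diagram.
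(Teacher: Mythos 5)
Your proof is correct in outline and reaches the same quadratic bound, but it takes a genuinely different route from the paper. The paper's argument works entirely inside the diagram by an ``onion-peeling'' induction: starting from $\gamma_1$, it builds shells $D_1, D_2, \ldots$, where $D_i$ is the union of closed faces touching the previous shell's inner boundary $\beta_{i-1}$. Each $\beta_i$ joins $\gamma_2$ to $\gamma_4$ and is therefore long (using that the labelled map to $X$ is $1$-Lipschitz and the boundary cycle is $K$-biLipschitz), giving $\gtrsim n/(KM)$ faces per shell; and since the shells stay inside the $iM$-neighbourhood of $\gamma_1$, about $n/(4KM)$ shells remain disjoint from $\gamma_3$, producing $\gtrsim n^2$ faces directly. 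Your proof instead establishes a lower bound on the minimum edge-cut separating $\alpha_1$ from $\alpha_3$ (via the dual transverse arc from $\alpha_2$ to $\alpha_4$, converted into a short edge-path through $\leq |S|+1$ faces), invokes Menger's theorem to get $\gtrsim n/(KM)$ edge-disjoint $\alpha_1$--$\alpha_3$ paths each of length $\gtrsim n/K$, counts edges, and then converts edges to faces via the face-perimeter identity. Both approaches are valid; the paper's version has the advantage of needing no external combinatorial machinery (no Menger, no planar duality), and so sidesteps exactly the technicality you flag at the end about minimum edge-cuts in disk diagrams arising as single transverse arcs -- that is the one step in your argument that genuinely requires the ``standard subdivision'' justification you allude to, and it is worth noticing that the paper's layering argument effectively builds the dual transverse arc ($\beta_i$) explicitly, so the two proofs are, at bottom, the primal and dual sides of the same quadratic filling estimate.
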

\begin{proof}
Let $\langle A\mid R\rangle$ be a finite presentation of $G$. Assume every relation in $R$ has length at most $M$ in $F(A)$.

It suffices to prove the result for all large enough $n$, so assume $n\geq 4K$.

Divide the cycle -- which we denote by $\gamma$ -- into three paths $\gamma_1,\gamma_2,\gamma_3$ of length $\lfloor\frac{n}{4}\rfloor$ and one path $\gamma_4$ of length between $\lfloor\frac{n}{4}\rfloor$ and $\lfloor\frac{n}{4}\rfloor+3$.

By construction, $d_X(\gamma_1,\gamma_3), d_X(\gamma_2,\gamma_4)\geq K^{-1} \lfloor\frac{n}{4}\rfloor$ and $\abs{\gamma_i}\geq \lfloor\frac{n}{4}\rfloor$.

Let $D$ be a van Kampen diagram with boundary $\gamma$. Let $D_1$ be the subdiagram consisting of all closed faces in $D$ containing an edge in $\gamma_1$. The closure of $\partial D_1\setminus \partial D$ in $\partial D$ contains a path $\beta_1$ connecting $\gamma_2$ to $\gamma_4$, so has length at least $K^{-1}\lfloor\frac{n}{4}\rfloor$ and is contained in the $M$-neighbourhood of $\gamma_1$. Note that $D_1$ contains at least $M^{-1}K^{-1}\lfloor\frac{n}{4}\rfloor$ faces.

Inductively, define $D_i$ to be the subdiagram consisting of all closed faces in $D$ which contain an edge in $\beta_{i-1}$ but are not in $D_{i-1}$. The closure of $\partial D_i\setminus (\partial D\cup \beta_{i-1})$ in $\partial D$ contains a path $\beta_i$ connecting $\gamma_2$ to $\gamma_4$, so has length at least $K^{-1}\lfloor\frac{n}{4}\rfloor$ and is contained in the $iM$-neighbourhood of $\gamma_1$. If $iM\leq K^{-1}\lfloor\frac{n}{4}\rfloor$ then $D_i$ contains at least $M^{-1}K^{-1}\lfloor\frac{n}{4}\rfloor$ faces.

Thus, $D$ contains at least $M^{-2}K^{-2} \lfloor\frac{n}{4}\rfloor^2$ faces. If $n\geq 4$, then $\lfloor\frac{n}{4}\rfloor\geq \frac{n}{8}$, so $D$ contains at least $L^{-1}n^2$ faces for $L=64K^2M^2$.
\end{proof}
Theorem~\ref{thm:fp+quadDehnimpliessqroot} is a consequence of the following.
\begin{lemma}\label{lem:cutlowerbd} Let $\langle A \mid R\rangle$ be a finite triangular presentation of a group $G$ such that every cycle of length at most $n$ in $X=\textrm{Cay}(G,A)$ is the boundary of a diagram with at most $Cn^2$ faces.
If $G$ is not hyperbolic then there exists some $\epsilon>0$ depending only on $C$ and an infinite subset $I\subseteq \N$ such that
\[
 \sep_X(n)\geq \epsilon n^{\frac12}, \quad \textrm{for all }n\in I.
\]
\end{lemma}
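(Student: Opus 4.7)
The plan is to use Proposition~\ref{prop:hypcycles} and Lemma~\ref{lem:minarea} to produce, for an infinite set of indices $n$, a subgraph $\Gamma_n \subset X$ of size $\asymp n^2$ whose cut-size is at least $\epsilon n$, so that $\sep_X(|\Gamma_n|) \geq \epsilon n \gtrsim \sqrt{|\Gamma_n|}$. Since $G$ is not hyperbolic, Proposition~\ref{prop:hypcycles} supplies $18$-biLipschitz embedded cycles $\gamma_n \subset X$ of arbitrarily large length $n$. Each $\gamma_n$ bounds a van Kampen diagram $D_n$ with at most $Cn^2$ faces by hypothesis, and by Lemma~\ref{lem:minarea} any such diagram has at least $L^{-1}n^2$ faces, with $L = L(18)$ fixed. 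Let $\varphi_n:D_n\to X$ be the labelling map, and set $\Gamma_n$ to be the subgraph of $X$ induced on $\varphi_n(V(D_n))$. Euler's formula for the triangular planar disk $D_n$ yields $|\Gamma_n|\leq |V(D_n)| \leq C'n^2$ for some $C' = C'(C)$. Taking $I := \{|\Gamma_n|\}_n$, an infinite subset of $\N$, the statement reduces to proving $\cut(\Gamma_n)\geq \epsilon n$ for some $\epsilon=\epsilon(C)>0$.

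For the cut bound I would argue by contradiction. Suppose $S\subset V(\Gamma_n)$ satisfies $|S|<\epsilon n$ and every component of $\Gamma_n\setminus S$ has at most $|\Gamma_n|/2$ vertices. Because $\gamma_n$ is embedded in $X$, the set $S\cap V(\gamma_n)$ has size at most $|S|$, so $\gamma_n\setminus S$ consists of at most $\epsilon n$ boundary arcs whose total length exceeds $(1-\epsilon)n$, and each such arc lies in a single component of $\Gamma_n\setminus S$. The next ingredient is a planar isoperimetric analysis of $D_n$: its area $a_n$ lies between $L^{-1}n^2$ and $Cn^2$ while its perimeter equals $n$, placing $D_n$ at the extremal regime of the discrete planar isoperimetric inequality; a vertex-separator bisecting $D_n$ as a planar $2$-complex must therefore have size $\gtrsim n$. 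Combining the boundary-arc partition with this planar bottleneck, one shows that one component of $\Gamma_n\setminus S$ receives the $\varphi_n$-image of a subdiagram of $D_n$ of area $\geq a_n/2 \geq L^{-1}n^2/2$, and hence accounts for more than $|\Gamma_n|/2$ vertices of $\Gamma_n$, giving the desired contradiction.

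The hard part will be that $\varphi_n$ is in general far from injective on the interior of $D_n$, so the naive pullback $S' := \varphi_n^{-1}(S)\cap V(D_n)$ can be much larger than $S$, and a planar isoperimetric bound on $|S'|$ carries no direct information about $|S|$. The key is to use the embeddedness of $\partial D_n = \gamma_n$ in $X$---which is provided precisely by the $18$-biLipschitz hypothesis on $\gamma_n$---in an essential way when moving between cuts of $\Gamma_n$ in $X$ and cuts of $D_n$ in the plane. Concretely, one must show that if $S$ disconnects $\Gamma_n$ into pieces each of size $\leq |\Gamma_n|/2$, then the induced partition of the boundary arcs of $\partial D_n$ can be propagated across $D_n$ through chains of adjacent $2$-cells to exhibit a set of vertices of $D_n$ of cardinality $\lesssim |S|$ that planarly bisects $D_n$; the isoperimetric lower bound on planar bisections of $D_n$ then forces $|S|\gtrsim n$. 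Handling this distinction between ``cutting the diagram'' and ``cutting its image'' is the main obstacle, and resolving it is the crux of the proof.
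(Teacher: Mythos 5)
Your high-level plan (use Proposition~\ref{prop:hypcycles} and Lemma~\ref{lem:minarea} to produce diagrams of area $\asymp n^2$ and perimeter $n$, then show they cost $\gtrsim n$ to cut) is the right one, but the proposal has a genuine gap at its core, and you have also set the problem up in a way that is strictly harder than necessary. You take $\Gamma_n$ to be the subgraph of $X$ induced on $\varphi_n(V(D_n))$, whereas the paper takes $\Gamma_n$ to be the abstract $1$-skeleton $D_n^{(1)}$ of the van Kampen diagram itself and proves $\cut(D_n^{(1)})\gtrsim n$ directly by a planar argument. Working with the image forces you into exactly the non-injectivity problem you flag at the end (pulling a cut set in $X$ back to a planar cut of $D_n$ can blow it up), and the fix you sketch -- ``propagating the boundary partition across chains of adjacent $2$-cells'' to produce a small planar separator -- is not an argument; it is a restatement of the difficulty. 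Nothing in the proposal actually controls the size of the planar separator in terms of $|S|$.

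More importantly, the pivotal step is simply false as stated: a triangulated planar disc of perimeter $n$ and area $\gtrsim n^2$ need \emph{not} have planar bisection width $\gtrsim n$. Glue two triangulated discs, each of perimeter $\asymp n$ and area $\asymp n^2$, along a bounded-length arc: the result has perimeter $\asymp n$, area $\asymp n^2$, and a bounded cut set disconnecting it into halves. Area and perimeter alone give no planar bottleneck. What makes the paper's argument work is a combination of two hypotheses that your proposal never invokes in this step: (i) $D_n$ is chosen to have \emph{minimal} area, so that any subdiagram is itself area-minimal and thus has $\Area\leq C(\text{boundary length})^2$; and (ii) $\partial D_n=\gamma_n$ is $18$-biLipschitz embedded. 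For each component $F$ of $D_n^{(1)}\setminus S_n$ the paper extracts a subdiagram whose boundary $\gamma_F$ alternates between arcs $\beta_i\subset\partial_e F\subset S_n$ and arcs $\alpha_i\subset\partial D_n$; the biLipschitz condition bounds $\sum|\alpha_i|\leq 18\sum|\beta_i|$, so the boundary length of each such subdiagram is $O(|S_n|)$, and minimality bounds its area by $O(|S_n|^2)$. Summing over all small components leaves one component with $>|\Gamma_n|/2$ vertices whenever $|S_n|$ is a small multiple of $n$. Without the minimality of $D_n$ and the quantitative use of the $18$-biLipschitz condition \emph{inside} the cut estimate (not merely to produce $\gamma_n$), the ``planar bottleneck'' step cannot be repaired.
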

\begin{proof}
Suppose $G$ is not hyperbolic. Then by Proposition \ref{prop:hypcycles} there exists a family $\{\gamma_n\}_{n\in I}$ of $18$-biLipschitz embedded cyclic subgraphs of $X$ where the length of the cycle $\gamma_n$ is $n$, and $I$ is an infinite subset of $\mathbb{N}$.

Let $D_n$ be a minimal area van Kampen diagram with boundary $\gamma_n$; by assumption $D_n$ contains at most $Cn^2$ faces. By Lemma \ref{lem:minarea}, $D_n$ contains at least $L^{-1}n^2$ faces, where $L$ is a uniform constant.  Let $\Gamma_n$ be the $1$--skeleton of $D_n$. We claim that $\cut(\Gamma_n)\geq \epsilon \abs{\Gamma_n}^{\frac12}$, for some $\epsilon>0$ which will be determined during the proof.

Firstly, notice that $\abs{\Gamma_n}\leq 3Cn^2$.

As in the proof of Lemma \ref{lem:minarea}, split $\gamma_n$ into four subpaths $\gamma_{n,1},\gamma_{n,2},\gamma_{n,3}$ of lengths $\lfloor \frac{n}{4}\rfloor$ and $\gamma_{n,4}$ of length between $\lfloor \frac{n}{4}\rfloor$ and $\lfloor \frac{n}{4}\rfloor+3$.

Suppose that $S_n$ is a cut set of $\Gamma_n$.  List the connected components of $\Gamma_n\setminus S_n$. 
For each component $F$, the external boundary of $F$ is contained in $S_n$,
where the external boundary is the full subgraph with vertex set $\partial_e F:=\{v\in V\Gamma_n\mid d_{\Gamma_n}(v,F)=1\}$.

\textbf{Claim:} There is a cyclic graph $\gamma_F$ so that each edge of $\gamma_F$ is either in $\partial_e F$ or lies in $\partial D_n$ and has at least one endpoint in $F$.
Moreover, the subdiagram of $D_n$ with boundary $\gamma_F$ contains $F$.

\begin{proof}[Proof of Claim:]
	We have that $F$ is a full subgraph of $D_n \subset \R^2$.
	Let $D^F_n$ be the union of $F$ and all faces of $D_n$ which are bounded by $F$;
	we may assume that $\R^2\setminus D^F_n$ has no bounded components by adding any vertices and edges in such to $D^F_n$.
	Thus $D^F_n$ is contractible, and so defines a subdiagram of $D_n$.

	Because the presentation is triangular, the external boundary $\partial_e F$ of $F$ consists of a collection of paths joining points in $\partial D_n$.  By following around $\partial D^F_n$ in the plane we find a concatenation of paths in $\partial_e F$ and in $\partial D_n$ meeting $F$.
	Cutting out any loops if necessary, we find the desired cyclic subgraph.
\end{proof}

Given the claim, if $\abs{S_n}< \frac{1}{18}\lfloor \frac{n}{4}\rfloor$ then the paths in $\partial_e F \subset S_n$ for any component $F$ can meet at most two consecutive sides of $\gamma_{n,1}$, $\gamma_{n,2}$, $\gamma_{n,3}$, $\gamma_{n,4}$,
and there is exactly one connected component intersecting both $\gamma_{n,1}$ and $\gamma_{n,3}$.
This component must also intersect both $\gamma_{n,2}$ and $\gamma_{n,4}$. Every other component intersects at most $2$ consecutive sides.

All components $F$ which intersect no sides are contained in subdiagrams with combined boundary length at most $2\abs{S_n}$ (since each edge is on the boundary of at most $2$ faces) so together these diagrams have at most $4C\abs{S_n}^2$ faces and $12C\abs{S_n}^2$ vertices, as $D_n$ has minimal area.

Now consider a component $F$ which intersects either one or two consecutive sides of $\gamma_F$.  By the claim there is a cycle enclosing $F$ which can be viewed as a concatenation of paths
\[
 \alpha_1\beta_1\alpha_2\beta_2\ldots
\]
where each $\alpha_i$ is in $\partial D_n$ and each $\beta_i$ is in $\partial_e F$. Since the boundary cycle is $18$-biLipschitz embedded the combined lengths of the $\alpha_i$ is at most $18$ times the combined lengths of the $\beta_i$. Each edge in a path $\beta_i$ contributes to at most $2$ such components. Therefore all components which intersect either one or two consecutive sides are contained in subdiagrams with combined boundary length at most $(18+2)\abs{S_n}$, where the contribution of $2\abs{S_n}$ comes from paths contained in the exterior boundary of the component and the $18\abs{S_n}$ from the subpaths of $\gamma_F$.

Thus these components contain at most $3C(20\abs{S_n})^2$ vertices, as $D_n$ has minimal area. It follows that the component which intersects all four sides contains at least
\[
 v_n=\abs{\Gamma_n} - 3C(20\abs{S_n})^2 - 12C\abs{S_n}^2 - \abs{S_n}
\]
vertices. For $\epsilon'$ sufficiently small (independent of $n$) we have $\abs{S_n} < \epsilon' n$ implies $v_n> \abs{\Gamma_n}-\frac{1}{2L}n^2 \geq \frac12\abs{\Gamma_n}$, which contradicts $S_n$ being a cut set.

Since $\abs{\Gamma_n}\leq 3Cn^2$ we have $\cut(\Gamma_n) \geq \epsilon'n \geq \epsilon \abs{\Gamma_n}^{\frac12}$, for some $\epsilon>0$ independent of $n$.
\end{proof}

\begin{corollary}\label{lowerbdgeneralDehn} If $G$ is a finitely presented group with Dehn function at most $\Delta(n)$, then on some infinite subset $I$ of $\N$ we have
\[
 \sep_G(n) \gtrsim \Delta^{-1}(n):=\max\{k \mid \Delta(k)\leq n\}.
\]
\end{corollary}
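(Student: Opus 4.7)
The plan is to revisit the proof of Lemma~\ref{lem:cutlowerbd}, replacing the quadratic bound $Cn^2$ by the general bound $\Delta(n)$ throughout. I restrict to non-hyperbolic $G$: if $G$ were hyperbolic then $\Delta(n) \simeq n$, so $\Delta^{-1}(n) \simeq n$, and the claimed bound $\sep_G(n) \gtrsim n$ on an infinite subset would fail for standard examples such as cocompact Fuchsian groups.

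Fix a finite triangular presentation $\langle A \mid R\rangle$ of $G$ and set $X = \mathrm{Cay}(G,A)$. By Proposition~\ref{prop:hypcycles}, there is an infinite $J \subset \N$ and, for each $k \in J$, an $18$-biLipschitz embedded cyclic subgraph $\gamma_k \subset X$ of length $k$. Let $D_k$ be a minimal-area van Kampen diagram with $\partial D_k = \gamma_k$, let $\Gamma_k$ be its $1$-skeleton, and write $V_k = \abs{\Gamma_k}$. By Lemma~\ref{lem:minarea} and the Dehn function hypothesis, $L^{-1} k^2 \leq \abs{D_k} \leq \Delta(k)$; Euler's formula for a triangulated disc gives $V_k = \abs{D_k}/2 + k/2 + 1$, so $V_k$ is comparable to $\abs{D_k}$ and tends to infinity along $J$.

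The main step is to establish $\cut(\Gamma_k) \gtrsim \Delta^{-1}(V_k)$, imitating Lemma~\ref{lem:cutlowerbd}. Decomposing $\gamma_k$ into four near-equal subarcs and running the same path-and-component analysis, any putative cut set $S_k$ of size smaller than $\frac{1}{18}\lfloor k/4 \rfloor$ has a unique \emph{big} component meeting all four subarcs, while each other (\emph{small}) component is enclosed in a subdiagram of $D_k$. The combined boundary length of these enclosing subdiagrams is $O(\abs{S_k})$, using the $18$-biLipschitz property exactly as in the proof of Lemma~\ref{lem:cutlowerbd}; and by minimality of $D_k$ each enclosing subdiagram is itself of minimal area, hence has at most $\Delta(\text{boundary length})$ faces. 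Using superadditivity of $\Delta$ (valid after passing to a convex representative in its $\simeq$-class) the combined vertex count of the small components is at most $C'\Delta(\abs{S_k})$. The cut condition then becomes
\[
	V_k - \abs{S_k} - C'\Delta(\abs{S_k}) \leq V_k/2,
\]
which forces $\Delta(\abs{S_k}) \gtrsim V_k$, i.e.\ $\abs{S_k} \gtrsim \Delta^{-1}(cV_k)$ for some $c > 0$, and hence $\abs{S_k} \gtrsim \Delta^{-1}(V_k)$ up to $\simeq$.

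Taking $I = \set{V_k : k \in J}$, which is infinite, we conclude $\sep_G(n) \geq \cut(\Gamma_k) \gtrsim \Delta^{-1}(n)$ for every $n = V_k \in I$. The principal obstacle is simply the careful bookkeeping involved: verifying that the intricate planar-graph Claim of Lemma~\ref{lem:cutlowerbd} and the classification of components by which subarcs of $\gamma_k$ they meet transfer cleanly with $\Delta$ in place of $Cn^2$, and checking that the superadditivity used to bound the combined area of small subdiagrams survives passage to the $\simeq$-equivalence class of the Dehn function.
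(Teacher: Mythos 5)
Your proof is correct and follows the paper's intended route: rerun the component analysis of Lemma~\ref{lem:cutlowerbd} with $\Delta$ in place of $Cn^2$, using minimality of $D_k$ and superadditivity of (a convex representative of) $\Delta$ to bound the total vertex count of the small components. You are also right that the corollary implicitly assumes $G$ is non-hyperbolic, a point the paper leaves unsaid. One small improvement over the paper's one-line sketch: you extract $\cut(\Gamma_k)\gtrsim\Delta^{-1}(V_k)$ directly from the inequality $V_k/2\leq |S_k|+C'\Delta(|S_k|)$, rather than through the paper's intermediate assertion that $\cut(\Gamma_k)\gtrsim k$; for superquadratic $\Delta$ the combined area of the enclosing subdiagrams, bounded only by $\Delta(O(|S_k|))$, need not be $o(|\Gamma_k|)$ when $|S_k|$ is a small multiple of $k$, so that intermediate claim is not obviously justified, whereas your weaker bound is exactly what the corollary needs and follows cleanly.
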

\begin{proof}
  To see this simply follow the proof of Lemma \ref{lem:cutlowerbd} and deduce that if $\abs{S_n}$ is less than some small multiple of $n$ then the largest component of $\Gamma_n\setminus S_n$ is too large, so the cut size of $\Gamma_n$ is $\gtrsim n \gtrsim \Delta^{-1}(|\Gamma_n|)$.
\end{proof}

\begin{remark}
Corollary \ref{lowerbdgeneralDehn} cannot be improved simply by taking diagrams with quasi-isometrically embedded boundary and larger area, since the improvements to the cut size cancel out the increased area. For example, if the $D_n$ have cubic area in a graph with cubic Dehn function, then we can increase the size of $S_n$ to some multiple of $n$, but still get the lower bound of cube root for the separation of this diagram.
\end{remark}

\newcommand{\etalchar}[1]{$^{#1}$}
\def\cprime{$'$}

\end{document}